\documentclass[12pt,oneside]{amsart}
\usepackage{amsxtra}
\usepackage{amsopn}
\usepackage{amsmath,amsthm,amssymb}
\textwidth 6.3in
\oddsidemargin 0.1in
\evensidemargin 0.1in

\usepackage[hypertex]{hyperref} 

\title[New  HKT manifolds arising from quaternionic representations]{New  HKT manifolds arising from \\quaternionic representations}
\author{M. L. Barberis}
\author{ A. Fino}
\thanks{Research partially supported by
Conicet, ANPCyT, Secyt-UNC (Argentina) and by MIUR, GNSAGA (Italy)}
\subjclass[2000]{53C26, 22E60, 22F50}

\address{Laura Barberis: FaMAF-CIEM, Universidad
Nacional de C\'ordoba\\5000 C\'ordoba, Argentina}
\email{barberis@mate.uncor.edu}
  \address{Anna Fino: Dipartimento di Matematica, Universit\`a di Torino\\
Via Carlo Alberto 10, 10123 Torino, Italy}
\email{annamaria.fino@unito.it}

\newtheorem{teo}{Theorem}[section]
\newtheorem{remark}{Remark}[section]
\newtheorem{prop}{Proposition}[section]
\newtheorem{corol}{Corollary}[section]
\newtheorem{lemm}{Lemma}[section]
\newtheorem{example}{Example}[section]

\newcommand{\beq}{\begin{equation}}
\newcommand{\eeq}{\end{equation}}
\newcommand{\bqn}{\begin{eqnarray}}
\newcommand{\eqn}{\end{eqnarray}}
\newcommand{\bqne}{\begin{eqnarray*}}
\newcommand{\eqne}{\end{eqnarray*}}

\newcommand{\R}{{\Bbb R}}
\renewcommand{\H}{{\Bbb H}}
\newcommand{\C}{{\Bbb C}}

\begin{document}
\begin{abstract} We give a procedure for constructing an  $8n$-dimensional HKT Lie algebra starting from a
$4n$-dimensional  one by using a quaternionic representation of the
latter. The strong (respectively, weak, hyper-K\"ahler, balanced)
condition is preserved by our construction. As an application of our
results we obtain a new compact HKT manifold with holonomy in
$SL(n,\Bbb H)$ which is not a nilmanifold. We  find in addition new
compact  strong HKT manifolds.

We also show that every K\"ahler Lie algebra equipped with a flat,
torsion-free complex connection gives rise to an HKT Lie algebra. We
apply this method to two distinguished  $4$-dimensional K\"ahler Lie
algebras, thereby obtaining two conformally balanced HKT metrics in
dimension $8$.

Both techniques prove to be an effective tool for giving the explicit expression of the corresponding HKT metrics.
\end{abstract}

\maketitle

\section{Introduction}

A hyper-Hermitian structure on a $4n$-dimensional manifold $M$ is
given by a hypercomplex structure $\{ J_ \alpha \}$, $\alpha =
1,2,3$ and a Riemannian metric $g$ compatible with $J_{\alpha}$, for
any $\alpha$.  The hyper-Hermitian   manifold  $(M, \{ J_ \alpha \},
g)$ is said to be  {\em hyperk\"ahler with torsion (HKT)} \cite{HP}
if there exists a hyper-Hermitian connection  $\nabla^B$ whose
torsion tensor is a $3$-form, i.e. if
$$
\nabla^B g =0, \quad  \nabla^B J_{\alpha} =0, \, \alpha = 1,2,3,
\quad c (X,Y, Z) = g (X, T^B(Y, Z)) \; {\mbox{ is a $3-$form}},
$$
where  $T^B$ is  the torsion of $\nabla^B$. An HKT structure is
called {\em strong} or {\em weak} according  to the fact that the
$3$-form $c$  is closed or not.  HKT geometry is a natural
generalization of hyper-K\"ahler geometry, since when $c =0$ the
connection $\nabla^B$ coincides with the Levi-Civita connection.

 On any Hermitian
manifold there exists a unique Hermitian connection whose torsion
tensor is a $3$-form. Such a connection is called in Hermitian
geometry the Bismut connection \cite{Bi} or KT connection.
In the case of an HKT manifold the three Bismut connections
associated to the Hermitian structures $(J_{\alpha}, g)$ coincide
and   this connection is said to be an HKT connection.  In terms of
the associated K\"ahler forms
$$
\omega_{\alpha} (X, Y) = g (J_{\alpha} X, Y), \quad \alpha =1,2,3,
$$
the HKT condition becomes equivalent to
\begin{equation}\label{hkt}
J_1 d   \omega_1 = J_2 d   \omega_2 = J_3 d   \omega_3 ,
\end{equation}
or to $\overline \partial_{J_1} (\omega_2 - i \omega_3) =0$
\cite{GP}. It has been shown in \cite{CS} that if $(M, \{ J_
\alpha\}, g)$ is almost hyper-Hermitian, then condition \eqref{hkt}
implies the integrability of $J_ \alpha, \; \alpha =1,2,3.$

Hyper-Hermitian connections with totally skew-symmetric torsion are
present in many branches of theoretical and mathematical  physics.
For instance,  the previous connections appear on  supersymmetric
sigma models  with  Wess-Zumino term \cite{GHR, HP, HP2}  and in
supergravity theory \cite{GPS, PT}.

The first examples of strong HKT structures were found on reductive
Lie  groups   with compact semisimple factor by using the
hypercomplex structure constructed  by Joyce in \cite{Jo} (see for
instance \cite{GP,SSTV}). These Lie groups are   also  examples of generalized hyper-K\"ahler manifolds. This type of structures
was  introduced by Gates, Hull and R\"ocek in
\cite{GHR} and studied in more detail
in \cite{Br}.

 In \cite{BS} it was proved that,  as in
the hyper-K\"ahler case,  locally any HKT metric admits an HKT
potential and in \cite{GPP}     HKT reduction has been studied in
order to construct new examples. Non-homogeneous examples of compact
HKT  manifolds  have been constructed  in \cite{Ve2} by considering
the total space of a hyperholomorphic  bundle over an HKT  manifold.

Some geometrical and topological properties have been studied. For
instance, a   simple characterisation of HKT  geometry in terms of
the intrinsic torsion of the $Sp(n) Sp(1)$-structure   was obtained
in \cite{CS}. A version of  Hodge theory for HKT manifolds has been
given   in  \cite{Ve3} by exploiting a remarkable analogy between
the de Rham complex of a K\"ahler manifold and the Dolbeault complex
of an HKT manifold. More recently, in \cite{Ve4} balanced HKT
metrics are studied, showing that the  HKT metrics are precisely the
quaternionic Calabi-Yau metrics defined in terms of the quaternionic
Monge-Amp\`ere equation.  Moreover, by \cite{Ve4}  a balanced HKT manifold has Obata connection with holonomy in $SL (n,  \H).$

In $4$ dimensions any  hyper-Hermitian manifold is HKT, but in
higher dimensions  this is no longer true. In fact, there exist
hypercomplex manifolds of dimension $\geq 8$ which do not admit any
HKT metric compatible with the hypercomplex structure \cite{FG,BDV}.
These manifolds are nilmanifolds, i.e. they are compact quotients of
nilpotent Lie groups by co-compact discrete subgroups. Strong KT
geometry on $6$-dimensional nilmanifolds has been studied in
\cite{FPS,Ug}.

It is known  that  on a Lie algebra $\frak g$, a hyper-Hermitian
structure with an abelian hypercomplex  structure $\{ J_{\alpha}
\}$,  i.e. such that
$$
[J_{\alpha} X, J_{\alpha} Y] = [X, Y],
$$
for any  $X, Y\in \frak g$ and $\alpha =1,2,3$,
  is always  weak  HKT, so many  HKT manifolds can be constructed by considering  abelian hypercomplex structures on solvable Lie algebras~\cite{DF}.

HKT geometry on nilmanifolds was investigated in \cite{DF}.
Subsequently in \cite{BDV}, it has been shown that a hyper-Hermitian
structure on a nilmanifold is HKT if and only if the hypercomplex
structure is abelian. In this work we provide  a counterexample to
this result for HKT solvmanifolds  (\S\ref{new_ex}). HKT structures
on nilmanifolds are always  weak,  and by \cite{Ve} the holonomy of
the Obata connection is contained   in $SL(n,
 \H)$,
 since for complex nilmanifolds the canonical bundle is trivial as a  holomorphic line bundle \cite{CG,BDV}.
As pointed out in \cite{Ve4}, the only known examples of  compact
HKT manifolds with holonomy in $SL(n, \Bbb H)$ are those
nilmanifolds admitting abelian hypercomplex structures.  In the
present paper we obtain  a new compact HKT manifold with holonomy in
$SL(n, \Bbb H)$ which is not a nilmanifold (\S\ref{new_ex}).
The
existence of strong HKT structures on solvmanifolds is an open
problem.

It has been shown in \cite{BD} that given a Hermitian or
hyper-Hermitian Lie algebra, its tangent Lie algebra with respect to
a suitable connection admits a natural hyper-Hermitian structure.
 In the paper we consider both cases separately  and show that the
tangent Lie algebra of an HKT Lie algebra may admit an HKT
structure. In this way  we can
construct  a family of new compact strong HKT manifolds, compact
quotients   of Lie groups which are neither reductive nor solvable
(Theorem \ref{compactHKT}).


The construction is made starting from a $4n$-dimensional HKT  Lie
algebra $\mathfrak g$ with a flat hyper-Hermitian connection
(Theorem \ref {construction}). This connection turns out to be  a
quaternionic representation of $\mathfrak g$ on $\H^n$. The weak,
strong, hyper-K\"ahler or balanced condition is preserved by the
construction. This result is generalised in
Theorem~\ref{generalization} starting from any quaternionic
representation of $\mathfrak g$ on $\H^q$, for arbitrary $q$, and it
can be viewed as an application of the general results obtained in
\cite{Ve2} for hyperholomorphic bundles over HKT manifolds.

We apply the previous procedure to the known strong HKT reductive
Lie algebras.   One of the compact strong  HKT manifolds  is the
product  of  $S^1$ by the  $7$-dimensional manifold with a weakly
integrable generalized $G_2$-structure found in \cite{FT}.  Starting
with an abelian hypercomplex structure on $\mathfrak g$  our
construction does not give an abelian  hypercomplex structure on the
new Lie algebra, unless the quaternionic representation is trivial
(Theorem~\ref{condab}).

On the other hand, for the hyper-Hermitian Lie algebras  constructed
in \cite{BD} starting from a Hermitian Lie algebra  with a flat and
torsion-free complex connection, we  prove that  if  the
hyper-Hermitian structure is HKT, then  the starting Hermitian Lie
algebra is necessarily K\"ahler  (Theorem \ref{construction2}).  The
hyper-Hermitian Lie algebras obtained in this way have flat Obata
connection. As an application of this result we find two
$8$-dimensional   Lie algebras with a  conformally balanced weak HKT structure, so in
particular these metrics are not hyper-K\"ahler.

\section{ Preliminaries}

Let $M$ be a $2n$-dimensional manifold.
We recall that an almost  complex structure $J$ on M  is
 integrable   if and only if
  the Nijenhuis tensor
$$N(X,Y)= J([X,Y]-[JX,JY])-([JX,Y]+[X,JY])$$
 vanishes for all vector fields  $X,Y$. In this case   $J$  is called a complex structure on $M$.

A  Riemannian metric $g$ on a complex manifold $(M, J)$ is said to
be  Hermitian if it is compatible with $J$, i.e. if  $g (JX, JY) = g
(X, Y)$, for any $X, Y$. We recall from \cite{Bi} that on any
Hermitian manifold $(M, J, g)$ there exists a unique  connection
$\nabla^B$, called the Bismut connection, such that
$$
\nabla^B J = \nabla^B g = 0
$$
and its torsion  tensor
$$c (X, Y, Z) = g (X, T^B (Y, Z))$$ is totally skew-symmetric, where
$T^B$ is the torsion of  $\nabla^B$.  The geometry associated to the
Bismut connection is called KT geometry and when $c =0$ it coincides
with the usual K\"ahler geometry.  A Hermitian metric $g$ on   a
complex manifold $M$  is called {\em{balanced}}
 if   the Lee form $\theta =  J d^* \omega$    vanishes or equivalently if $d^* \omega=0$, where $d^*$ is the adjoint of $d$ with respect to $g$ and $\omega$ is the associated K\"ahler form. Moreover, by
\cite{IP}  if  $\{e_1, \ldots,   e_{2n}\}$  is an orthonormal basis
of the tangent space $T_p M$, then
\begin{equation} \label{Leeform}
\theta _p(v) = - \frac 12  \sum_{i = 1}^{2n}  c (Jv, e_i, J e_i),
\end{equation}
for any tangent vector  $v \in T_p M$.

A hyper-Hermitian structure $(\{ J_{\alpha} \}, g)$ on a $4n$-dimensional manifold $M$  is given by  a  hypercomplex structure, i.e. a triple of complex structures
  $\{J_{\alpha}\}_ {\alpha=1,2,3}$ satisfying the quaternion relations
$$J_{\alpha}^2= -\text{id},\quad \alpha=1,2,3,\qquad J_1J_2=-J_2J_1=J_3,$$
and by a Riemannian metric $g$ compatible with $J_{\alpha}$, for any
$\alpha$. Given a hypercomplex  structure $\{ J_{\alpha} \}$  on $M$
there exists a unique torsion-free connection $\nabla^O$, called the
Obata connection,  such that $\nabla^O J_{\alpha} = 0$, for $\alpha
= 1, 2, 3$ (cf. \cite{Ob}).
%
%

 If on a  hyper-Hermitian manifold $(M, \{
J_{\alpha} \}, g)$  there exists a
hyper-Hermitian connection such that its torsion tensor $c$ is a
$3$-form, then the manifold $M$  is
  called   {\em hyper-K\"ahler with torsion} (HKT). This is equivalent to the fact  that the three Bismut connections associated to
  each Hermitian structure $(J_{\alpha}, g)$
  coincide.  If $c =0$ the Bismut connection is equal to both the Levi-Civita connection and the  Obata connection, since the
  manifold is hyper-K\"ahler.  HKT structures are called {\em {strong}} or {\em {weak}} depending on whether the
  torsion $c$ is closed or not. In the case of  an HKT manifold the Lee  forms  associated to the
  Hermitian structures  $(J_\alpha, g)$, $\alpha = 1,2,3$, coincide (see \cite{IP}).
Moreover, in general on an HKT manifold  the Ricci tensor of the Bismut connection is not  symmetric; by  \cite{IP} it is symmetric
if and only if  the torsion  $3$-form $c$ is co-closed, i.e. if and only if $d^* c =0$.

    A Riemannian  manifold  $(M,  g)$  is called
  {\em  generalized hyper-K\"ahler} or a $(4,4)$-manifold  if it has a pair
 of  strong HKT structures $(\{J^+_{\alpha} \}, g)$  and $(\{J^-_{\alpha} \}, g)$ for which $c^-= - c^+$,
 where $c^{\pm}$ denotes the torsion $3$-form  of the hyper-Hermitian connection  associated to the   HKT  structure $(\{J^{\pm}_{\alpha} \}, g)$.
These manifolds have been introduced  in \cite{GHR} and further
studied in  \cite{Gu, Br}.

  In this paper we  consider  Lie algebras  endowed with HKT  structures  which induce  left-invariant HKT structures  on  the corresponding simply connected Lie groups.

  Let $\mathfrak g$ be a Lie algebra  with an  (integrable)  complex
structure $J$  and an inner product $g$ compatible with $J$. If the
associated K\"ahler form $\omega (X,Y)= g(JX,Y)$ satisfies
$d\omega=0$, where
$$
d \omega(X, Y, Z) = - \omega ([X, Y], Z) - \omega([Y, Z], X) -
\omega([Z, X], Y),
$$
for any $X, Y, Z \in {\mathfrak g}$,
 the Hermitian Lie algebra  $(\mathfrak g , J,g)$ is  K\"ahler. Equivalently, $(\mathfrak g , J,g)$ is  K\"ahler
if and only if  $\nabla^{g} J =0$, where $\nabla^{g} $ is the Levi-Civita connection of $g$, which
  can be computed by
\begin{eqnarray}\label{LC}
2g(\nabla^{g} _XY,Z) &=&g([X,Y],Z) -g([Y,Z],X) + g([Z,X],Y),
\end{eqnarray}
for any $X,Y,Z$ in $\mathfrak{g}$.

Given a  hyper-Hermitian structure  $( \{ J_{\alpha} \}, g)$ on $\mathfrak g$,
   when the associated K\"ahler forms
$\omega_{\alpha}$ are closed, the hyper-Hermitian Lie algebra
$(\mathfrak g , \{ J_{\alpha} \}, g)$ is hyper-K\"ahler. This is
equivalent to the condition $\nabla^{g}   J_{\alpha}=0$, $\alpha
=1,2,3$. We point out  that a Lie group with a left-invariant
hyper-K\"ahler structure is necessarily flat since a hyper-K\"ahler
metric is Ricci flat and in the homogeneous case, Ricci flatness
implies flatness (see \cite{AK}). A characterisation of
hyper-K\"ahler Lie groups has been carried out  in \cite{BDF} in
order to obtain new complete hyper-K\"ahler metrics.

 For a  Lie  group $G$ with a
left-invariant hyper-Hermitian structure $(\{ J_{\alpha} \}, g)$, it was shown in \cite{DF}  that $(\{ J_{\alpha} \}, g)$
  is HKT  if and only if
  \begin{equation}
\label{HKTcondition}
\begin{array} {l}
g ([J_{\alpha} X, J_{\alpha} Y], Z) + g ([J_{\alpha} Y, J_{\alpha}
Z], X)
  + g ([J_{\alpha} Z, J_{\alpha} X], Y)\\ [4pt]
  = g ([J_{\beta} X, J_{\beta} Y], Z)
   + g ([J_{\beta} Y, J_{\beta} Z], X)
   + g ([J_{\beta} Z, J_{\beta} X], Y),
   \end{array}
\end{equation}
for any $X, Y, Z $ in the Lie algebra ${\mathfrak g}$ of $G$ and for any $\alpha$, $\beta \in \{ 1,
2, 3\}$. The Bismut connection $\nabla^B$ on $G$  is given by
the following equation
\begin{equation} \label{Bismut}
\begin{array} {l}
g (\nabla^B_X Y, Z) = \frac{1}{2} \{ g ([X, Y] - [J_{\alpha} X,
J_{\alpha} Y], Z)\\ [4pt]
  - g ([Y, Z] + [J_{\alpha} Y, J_{\alpha} Z], X)
  + g ([Z,X]
- [J_{\alpha} Z, J_{\alpha} X], Y)\},
\end{array}
\end{equation}
for any $X, Y, Z \in {\mathfrak g}$ and $\alpha \in \{ 1, 2, 3\}$.

\

\section{Construction of HKT   structures on tangent  Lie algebras}\label{Section}

   Let $\{ J_{\alpha} \}$ be a hypercomplex structure on a Lie
algebra ${\mathfrak g}$ and assume that $D$ is a flat connection on
$\mathfrak g$ such that $D J_{\alpha} =0$, for any $\alpha= 1,2,3$.
In other words,
\[ D:\mathfrak g \rightarrow \mathfrak{gl}(n,\Bbb H )\]
is a Lie algebra homomorphism, where $\dim \mathfrak g =4n$.
  Consider the tangent Lie algebra $T_{D} \,  \frak g :=
{\mathfrak g} \ltimes_{D} {\mathfrak g}$ with the following Lie
bracket: \begin{equation}\label{t_D}  [(X_1,X_2), (Y_1,Y_2)]=([X_1,
Y_1] , D _{X_1} Y_2 - D_{Y_1} X_2 )
\end{equation}
and  hypercomplex structure:
\begin{equation}  \label{hypercomplex}
{\tilde J}_1(X_1, X_2)= (J_1 X_1, J_1 X_2), \quad  {\tilde J}_2(X_1, X_2)= (J_2 X_2,  J_2 X_1),
\quad  {\tilde J}_3 = {\tilde J}_1 {\tilde J}_2.
\end{equation}

Since $D$ is flat, the Lie bracket \eqref{t_D} on $T_{D} \,  \frak
g$ satisfies the Jacobi identity. The integrability of the complex
structure ${\tilde J}_{\alpha}$ on $T_{D} \,  \frak g$ follows from
the fact that $J_{\alpha}$ is integrable and parallel with respect
to $D$ (see \cite[Proposition~3.3]{BD}). We show next that the Obata
connection $ {\tilde \nabla}^O$ of $\{ {\tilde J}_{\alpha} \}$ is
given in terms of  the Obata connection $\nabla ^O$ of $\{
J_{\alpha} \}$ and the flat connection $D$.

\begin{lemm}\label{lem_Ob} The Obata connection ${\tilde \nabla}^O$ of the hypercomplex  Lie
algebra $(T_{D} \, \frak g , \{ {\tilde J}_{\alpha } \})$ is related
to the Obata connection $\nabla^O$ of  $({\mathfrak g}, \{
J_{\alpha} \} )$ and the flat connection $D$ by
\begin{equation}\label{twoObata}
{\tilde \nabla}^O_{(X_1, X_2)} (Y_1, Y_2) = (\nabla^O_{X_1} Y_1,
D_{X_1} Y_2),
\end{equation}
for any $(X_1, X_2), (Y_1, Y_2) \in T_{D} \, \frak g$. Therefore,
${\tilde \nabla}^O$ and ${ \nabla}^O$ have the same infinitesimal
holonomy. In particular, $\tilde \nabla^O$ is flat if any only if
$\nabla^O$ is flat.
\end{lemm}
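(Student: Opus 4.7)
The plan is to invoke the characterisation of the Obata connection as the unique torsion-free connection on a hypercomplex manifold that preserves each of the complex structures $J_\alpha$. Concretely, I would define a connection $\tilde D$ on $T_D\, \mathfrak g$ by the right-hand side of \eqref{twoObata} and verify both defining properties, concluding $\tilde D = \tilde\nabla^O$ by uniqueness.

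The first step is torsion-freeness. A direct unpacking of $\tilde D_A B - \tilde D_B A - [A,B]$ using \eqref{t_D} shows that the first component reduces to the torsion of $\nabla^O$ on $\mathfrak g$ and vanishes, while in the second component the $D$-contributions cancel against the corresponding term of \eqref{t_D}.

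The second step is to check $\tilde D\,\tilde J_\alpha = 0$ for $\alpha=1,2,3$. For $\alpha=1$ this is immediate, since $\tilde J_1$ acts diagonally and both $\nabla^O$ and $D$ commute with $J_1$. For $\alpha = 2,3$ the verification is more delicate, because $\tilde J_2$ and $\tilde J_3$ interchange the two $\mathfrak g$-summands of $T_D\, \mathfrak g$; one has to intertwine the $\nabla^O$-action on one copy with the $D$-action on the other using both $\nabla^O J_\alpha = 0$ and $DJ_\alpha = 0$. Once these two properties are in place, uniqueness of the Obata connection yields $\tilde\nabla^O = \tilde D$.

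For the holonomy statement I would then compute the curvature $\tilde R$ of $\tilde\nabla^O$ from the explicit formula \eqref{twoObata}. Because the formula has a block shape, with $\nabla^O$ acting on the first summand and the flat Lie algebra homomorphism $D$ acting on the second, the curvature decomposes as $\tilde R = R^{\nabla^O} \oplus R^D = R^{\nabla^O} \oplus 0$, and the same block pattern propagates to all iterated covariant derivatives $\tilde\nabla^O \cdots \tilde R$. Since the infinitesimal holonomy Lie algebra at a point is generated by the values of such iterated derivatives of the curvature, it coincides with the corresponding object for $\nabla^O$; in particular $\tilde\nabla^O$ is flat if and only if $\nabla^O$ is flat.

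The main obstacle I anticipate is the $\tilde J_2$-preservation step: because $\tilde J_2$ swaps the two copies of $\mathfrak g$, verifying $\tilde D\,\tilde J_2 = 0$ forces one to carefully pair contributions from $\nabla^O$ on one summand with contributions from $D$ on the other, and it is here that the hypotheses $\nabla^O J_\alpha = 0$ and $DJ_\alpha = 0$ are both indispensable.
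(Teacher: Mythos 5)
Your overall strategy is the same as the paper's: define the candidate connection by the right-hand side of \eqref{twoObata}, check that it is torsion-free and parallelizes each $\tilde J_\alpha$, invoke uniqueness of the Obata connection, and then read off the curvature. The paper simply asserts the two verifications and writes the curvature identity $\tilde R^O_{(X_1,X_2),(Y_1,Y_2)}(Z_1,Z_2)=(R^O_{X_1,Y_1}Z_1,0)$; your torsion computation is correct, and your additional observation that the block shape propagates to all iterated covariant derivatives of $\tilde R^O$ is the right way to justify the claim about \emph{infinitesimal} holonomy.

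The problem sits exactly at the step you flag as delicate, and it is a genuine gap rather than a mere technicality: the intertwining you describe does not close up. With $\tilde J_2$ as printed in \eqref{hypercomplex}, i.e.\ $\tilde J_2(Y_1,Y_2)=(J_2Y_2,J_2Y_1)$, one computes, using $\nabla^OJ_2=0$ and $DJ_2=0$,
\begin{align*}
\bigl(\tilde\nabla^O_{(X_1,X_2)}\tilde J_2\bigr)(Y_1,Y_2)
&=\bigl(\nabla^O_{X_1}(J_2Y_2),\,D_{X_1}(J_2Y_1)\bigr)-\bigl(J_2D_{X_1}Y_2,\,J_2\nabla^O_{X_1}Y_1\bigr)\\
&=\bigl(J_2(\nabla^O_{X_1}-D_{X_1})Y_2,\;-J_2(\nabla^O_{X_1}-D_{X_1})Y_1\bigr).
\end{align*}
Hence the block-diagonal connection $(\nabla^O,D)$ preserves a swap-type $\tilde J_2$ only when $D=\nabla^O$, which is not assumed and does not hold in the examples of Sections 4 and 5. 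The way out is to notice that the structure actually used throughout Section 3 is the diagonal one, $\tilde J_\alpha(X_1,X_2)=(J_\alpha X_1,J_\alpha X_2)$ for every $\alpha$: this is the reading under which conditions \eqref{abelian} and \eqref{sym} come out as stated in the proofs of Theorems \ref{condab} and \ref{construction}, and it is the exact analogue of \eqref{hcx}. For the diagonal structure the verification of $\tilde\nabla^O\tilde J_\alpha=0$ is immediate for all three $\alpha$, exactly as in your $\alpha=1$ case, with no intertwining between the two summands. So you should either read \eqref{hypercomplex} diagonally (in which case your proof becomes correct and, in fact, simpler than you anticipate), or, if you keep the swap, acknowledge that the stated formula cannot be the Obata connection and a different one would be required.
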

\begin{proof}
The connection ${\tilde \nabla}^O $ defined by \eqref{twoObata} is
torsion-free and satisfies ${\tilde \nabla}^O {\tilde J}_{\alpha}
=0$ for $\alpha =1,2,3$, therefore ${\tilde \nabla}^O  $  is the
Obata onnection corresponding to $\{ {\tilde J}_{\alpha } \}$.
Moreover, if $R^O$ and ${\tilde R}^O$ denote the curvature tensors
of $\nabla^O$ and $\tilde \nabla^O$, respectively, we have the
following relation:
$$
\begin{array} {lcl}
{\tilde R}^O_{(X_1, X_2), (Y_1, Y_2)} (Z_1, Z_2) &= &[{\tilde  \nabla}^O_{(X_1, X_2)},{\tilde  \nabla}^O_{(Y_1, Y_2)}] (Z_1, Z_2) -
{\tilde  \nabla}^O_{[(X_1, X_2), (Y_1, Y_2)]}  (Z_1, Z_2)\\[3pt]
& = &(R^O_{X_1, Y_1} Z_1, 0).
\end{array}
$$
\end{proof}

We  study next a necessary and sufficient condition for $\{ {\tilde
J}_{\alpha } \}$ to be an abelian hypercomplex structure, since it
is known that such structures on a Lie algebra give rise to weak HKT
structures, see for instance \cite{GP,DF,BDV,Fi}. In fact, we show
that a  hypercomplex structure $\{J_{\alpha} \}$ on $\mathfrak g$
induces an abelian hypercomplex structure  on $T_{D} \, \mathfrak g$
if and only if $\{J_{\alpha} \}$ is abelian and $D =0$. In other
words, this construction preserves abelianness of the hypercomplex
structure if and only if $T_{D} \, \mathfrak g$ is a trivial central
extension of $\mathfrak g$.

\begin{teo} \label{condab}Let $\{ J_{\alpha} \}$ be a hypercomplex structure on a
$4n$-dimensional  Lie algebra~$\frak g$, $D$  a flat connection such
that $D   J_{\alpha} =0$, for any $\alpha$, and $\{ {\tilde
J}_{\alpha} \}$ the induced hypercomplex structure on $T_{D} \,
{\mathfrak g}$ given by \eqref{hypercomplex}. Then $\{ {\tilde
J}_{\alpha} \}$ is abelian if and only if $\{ J_{\alpha} \}$ is
abelian on $\mathfrak g$ and $D =0$, i.e. $T_D \, {\mathfrak g} =
{\mathfrak g} \oplus \H^n$.
\end{teo}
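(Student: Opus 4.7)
The substantive content of the theorem is the forward direction; the converse reduces, in light of the forward direction, to the trivial case where $T_D\mathfrak{g}$ itself is an abelian Lie algebra (by \eqref{t_D} with $D=0$ and $\mathfrak g$ abelian), on which $\{\tilde J_\alpha\}$ is automatically abelian from \eqref{hypercomplex}.

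Assume therefore that $\{\tilde J_\alpha\}$ is abelian, i.e.\ $[\tilde J_\alpha X,\tilde J_\alpha Y]=[X,Y]$ in $T_D\mathfrak{g}$ for every $X,Y$ and every $\alpha$. Expanding via the bracket \eqref{t_D}, the formulas \eqref{hypercomplex}, and the hypothesis $DJ_\alpha = 0$ (so that $J_\alpha$ commutes with every $D_X$), the identity decomposes into separate first- and second-component equations in $\mathfrak{g}$. At $\alpha = 2$, since $\tilde J_2(X_1,X_2)=(J_2 X_2, J_2 X_1)$, the first-component equation reads $[J_2 X_2, J_2 Y_2]=[X_1, Y_1]$ for all $X_1,X_2,Y_1,Y_2\in\mathfrak{g}$. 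Specializing $X_2 = Y_2 = 0$ immediately forces $[X_1, Y_1] = 0$, so $\mathfrak{g}$ itself is abelian and hence $\{J_\alpha\}$ is abelian.

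It remains to show $D = 0$. I extract two relations from the second-component identities. Setting $Y_1 = 0$ at $\alpha = 1$ produces $D_{X_1} Y_2 = J_1 D_{J_1 X_1} Y_2$ for all $X_1, Y_2$, equivalently $D_{J_1 X} = -J_1 D_X$ as endomorphisms of $\mathfrak{g}\cong\H^n$. Setting $X_1 = Y_2 = 0$ at $\alpha = 2$ produces $D_{Y_1} X_2 = -J_2 D_{J_2 X_2} Y_1$; writing $Z = J_2 X_2$ and using $[D_{Y_1}, J_2] = 0$, this simplifies to the symmetry
\[
D_X Y = D_Y X \quad\text{for all } X, Y \in \mathfrak{g}.
\]

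The decisive step, which I expect to be the only serious obstacle, is to merge the intertwining $D_{J_1 X} = -J_1 D_X$ with this symmetry. Compute $D_{J_1 X} Y$ in two ways: from the intertwining, $D_{J_1 X} Y = -J_1 D_X Y$; from symmetry followed by $[D_Y, J_1] = 0$ and a second use of symmetry, $D_{J_1 X} Y = D_Y (J_1 X) = J_1 D_Y X = J_1 D_X Y$. Equating gives $2 J_1 D_X Y = 0$, whence $D_X Y = 0$ for all $X, Y \in \mathfrak{g}$, i.e.\ $D = 0$. Then by \eqref{t_D} the Lie algebra $T_D \mathfrak{g}$ splits as $\mathfrak{g} \oplus \H^n$, completing the proof.
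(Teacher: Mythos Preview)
Your forward direction is correct and follows a genuinely different route from the paper. The paper restricts to the mixed pairs $(X,0),(0,Y)$, reduces the abelian condition to $D_X = D_{J_\alpha X} J_\alpha$ for each $\alpha$, and then cycles through all three indices to obtain both $D_X = D_{J_\alpha X} J_\alpha$ and $D_X = -D_{J_\alpha X} J_\alpha$, whence $D=0$; that $\{J_\alpha\}$ is abelian on $\mathfrak g$ is read off separately. You instead exploit the factor swap in $\tilde J_2$: the first-component identity at $\alpha=2$ with $X_2=Y_2=0$ forces $[X_1,Y_1]=0$, so $\mathfrak g$ itself is abelian as a Lie algebra, strictly stronger than what the paper extracts. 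Your argument for $D=0$ then combines the intertwining $D_{J_1 X}=-J_1 D_X$ (from $\alpha=1$) with the symmetry $D_X Y=D_Y X$ (from $\alpha=2$), needing only two of the three complex structures rather than the full cyclic interplay.

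Your handling of the converse, however, is not a valid argument for the theorem as stated: you invoke the forward direction to supply ``$\mathfrak g$ abelian'', but the converse hypothesis is only ``$\{J_\alpha\}$ abelian and $D=0$'', and one cannot borrow conclusions from the other implication. What you have in fact uncovered is that the converse, as literally written, fails. Take $\mathfrak g=\mathfrak{aff}(\C)$ with its abelian hypercomplex structure and $D=0$; for $(X_1,0),(Y_1,0)$ with $[X_1,Y_1]\neq 0$ one has $[\tilde J_2(X_1,0),\tilde J_2(Y_1,0)]=[(0,J_2 X_1),(0,J_2 Y_1)]=0\neq([X_1,Y_1],0)$, so $\tilde J_2$ is not abelian on $T_0\,\mathfrak g$. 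The correct equivalence is the one your forward direction actually proves, namely $\{\tilde J_\alpha\}$ abelian $\Leftrightarrow$ $\mathfrak g$ abelian and $D=0$, and for \emph{that} statement the converse is indeed the triviality you describe.
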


\begin{proof}  The hypercomplex structure $\{ {\tilde J}_{\alpha} \}$ on $T_{D} \,  {\mathfrak g}$ is abelian
if and only if $\{ J_{\alpha} \}$ is abelian on $\mathfrak g$ and
$$
[ ({\tilde J}_{\alpha} (X,0), {\tilde J}_{\alpha} (0,Y)] = [ ((X,0),
(0,Y)], \quad X, Y \in {\mathfrak g}.
$$
The above equation is equivalent to the condition
\begin{equation}\label{abelian}
D_X=D_{J_{\alpha} X}  J_{\alpha},
\end{equation}
for any $\alpha = 1,2,3$ and $X \in \mathfrak g$. Let   $(\alpha ,
\beta, \gamma)$ be a cyclic permutation of $(1,2,3)$.
Equation~\eqref{abelian} implies that
$$J_{\alpha} D_{J_{\alpha} X}
=D_{J_{\alpha} X} J_{\alpha}=D_{J_{\beta} X}  J_{\beta}=J_{\beta}
D_{J_{\beta} X},
$$ for any $X\in \frak g$, therefore, $$ -D_{J_{\alpha}
X} = J_{\alpha}^2 D_{J_{\alpha} X} =J_{\alpha} J_{\beta}
D_{J_{\beta} X} = J_{\gamma} D_{J_{\gamma} (J_{\alpha} X)} =
J_{\alpha} D_{J_{\alpha}^2 X}=-J_{\alpha} D_X .$$ Hence,
$$D_X =-J_{\alpha} D_{J_{\alpha}X} = - D_{ J_{\alpha}
X}J_{\alpha},$$ which together with \eqref{abelian} implies that
$D_X=0$ for any $X\in \frak g$.
\end{proof}

\

Starting with an abelian hypercomplex structure on $\mathfrak g$ it
is possible to get a non-vanishing flat connection $D$ such that
$T_D\, \mathfrak g$  carries an HKT structure. For instance, this
can be done for the $4$-dimensional Lie algebra ${\mathfrak {aff}}
(\C)$ with its abelian hypercomplex structure (see \S\ref{4dim_ex}).
Therefore, in view of Theorem~\ref{condab}, the induced
hypercomplex structure on the tangent Lie algebra $T_D\, {\mathfrak
{aff}} (\C)$ will be non-abelian.

\

 We consider next a hyper-Hermitian Lie algebra $({\mathfrak g}, \{
J_{\alpha} \}, g)$   with a flat  connection $D$ such that $D
J_{\alpha} =0$, for any $\alpha= 1,2,3$. Let $\{ {\tilde J}_{\alpha}
\}$ be the hypercomplex structure on $T_{D} \, \frak g$ given by
\eqref{hypercomplex} and let $\tilde g$  be the inner product on
$T_{D} \, \frak g$ induced by $g$ such that $( {\mathfrak g}, 0)$
and  $(0, {\mathfrak g})$ are orthogonal. Then $\tilde g$ is
compatible with all complex structures ${\tilde J}_{\alpha}$, that
is, $(T_D\,{\mathfrak g}, \{ {\tilde J}_{\alpha} \}, \tilde g)$ is a
hyper-Hermitian Lie algebra. The following result gives a necessary
and sufficient condition for $(T_D\,{\mathfrak g}, \{ {\tilde
J}_{\alpha} \}, \tilde g)$ to be HKT.

\begin{teo} \label{construction}
Let  $({\mathfrak g}, \{ J_{\alpha} \} ,\, g)$ be a  hyper-Hermitian
Lie algebra and  $D$  a flat connection such that $D   J_{\alpha}
=0$, for any $\alpha$. Then
 $(T_{D} \, \frak g
, \{ {\tilde J}_{\alpha } \}, \tilde g)$ is  HKT if and only if $
({\mathfrak g}, \{ J_{\alpha} \} ,\, g)$ is HKT  and  $D g =0$.
\end{teo}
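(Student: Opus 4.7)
The plan is to verify the HKT characterisation \eqref{HKTcondition} directly on $(T_D\mathfrak{g}, \tilde g, \{\tilde J_\alpha\})$ and translate each side of the iff into a condition on the data $(\mathfrak{g}, \{J_\alpha\}, g, D)$. Writing $\tilde C_\alpha(U, V, W) := \tilde g([\tilde J_\alpha U, \tilde J_\alpha V], W) + \text{(cyclic)}$, HKT on $T_D\mathfrak{g}$ is the $\alpha$-independence of $\tilde C_\alpha$.

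First I would expand $[\tilde J_\alpha(X_1, X_2), \tilde J_\alpha(Y_1, Y_2)]$ using \eqref{t_D} and \eqref{hypercomplex}. Since $D J_\alpha = 0$, I can commute $J_\alpha$ past each $D_{(\cdot)}$, so the second component of the bracket is $J_\alpha$ applied to a $D$-expression in the remaining components. Pairing with $(Z_1, Z_2)$ through $\tilde g$ and summing cyclically splits $\tilde C_\alpha$ into a ``$\mathfrak{g}$-piece'' involving brackets of $\mathfrak{g}$ paired through $g$, and a ``$D$-piece'' involving $D$-derivatives paired through $g$ on the second copy.

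Then I would compare $\tilde C_\alpha$ for different $\alpha$ on triples of special type. Arguments wholly in $(\mathfrak{g},0)$ feed the $\mathfrak{g}$-piece only, delivering the HKT identity \eqref{HKTcondition} on $(\mathfrak{g},\{J_\alpha\},g)$; arguments wholly in $(0,\mathfrak{g})$ produce a tautology because the second copy is an abelian ideal. The mixed triples—one argument in one copy and two in the other—activate the $D$-piece; using $DJ_\alpha = 0$ and the compatibility $g(J_\alpha u,v) = -g(u,J_\alpha v)$ repeatedly, the $\alpha$-independence of these mixed expressions reduces to $g(D_X Y, Z) + g(Y, D_X Z) = 0$ for all $X, Y, Z$, namely $Dg = 0$. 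For the converse direction I reverse these steps: assuming $\mathfrak{g}$ is HKT and $Dg = 0$, so that $D_X \in \mathfrak{sp}(n) = \mathfrak{gl}(n,\H) \cap \mathfrak{o}(g)$, each type of contribution to $\tilde C_\alpha$ is $\alpha$-independent, whence $T_D\mathfrak{g}$ is HKT.

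The main obstacle is extracting $Dg = 0$ from the mixed-triple equations. Because $\tilde J_2$ and $\tilde J_3$ swap the two copies of $\mathfrak{g}$ while $\tilde J_1$ preserves each, the identities $\tilde C_1 = \tilde C_2 = \tilde C_3$ genuinely couple contributions across the two copies rather than imposing them separately. Reading off $D_X \in \mathfrak{o}(g)$ therefore requires repeatedly moving $J_\alpha$'s in and out of $g$-pairings via $J_\alpha$-skewness, commuting them past $D_X$ through $DJ_\alpha = 0$, and exploiting the three hyper-Hermitian compatibilities simultaneously.
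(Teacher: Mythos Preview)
Your overall strategy matches the paper's: both expand the HKT characterisation \eqref{HKTcondition} on $T_D\mathfrak g$, separate it into a piece coming from brackets on $\mathfrak g$ and a piece coming from $D$, and read off the HKT condition on $\mathfrak g$ from the former and $Dg=0$ from the latter. There is, however, a concrete point where your plan as written would fail. You take literally the formula \eqref{hypercomplex} with $\tilde J_2(X_1,X_2)=(J_2X_2,J_2X_1)$ swapping the two summands, and then assert that triples wholly in $(\mathfrak g,0)$ recover the HKT identity on $\mathfrak g$. With the swapping $\tilde J_2$ this is false: for such triples one has $\tilde C_1=C_1^{\mathfrak g}$ but $\tilde C_2=0$, since $\tilde J_2$ sends $(\mathfrak g,0)$ into the abelian ideal $(0,\mathfrak g)$; thus $\tilde C_1=\tilde C_2$ would force $d\omega_1=0$ on $\mathfrak g$, not merely HKT. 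In fact the paper's own proof, as well as Lemma~\ref{lem_Ob}, Proposition~\ref{properties} and the generalisation \eqref{hcx}, are all computed with the \emph{diagonal} action $\tilde J_\alpha(X_1,X_2)=(J_\alpha X_1,J_\alpha X_2)$; the swap in \eqref{hypercomplex} is a misprint. Once you use the diagonal $\tilde J_\alpha$, your component splitting works exactly as you describe and coincides with the paper's ``first component / second component'' decomposition of \eqref{vanishing}.

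The other point is the step you yourself flag as the main obstacle: deducing $Dg=0$ from the mixed conditions. Your proposal names the tools but not the argument; the paper supplies it. Restricting the second component of \eqref{vanishing} to $X_2=0$, $Y_1=0$ shows that $D_{J_\alpha X}J_\alpha-D_{J_\beta X}J_\beta$ is $g$-symmetric for every $X$, equivalently that $(D_{J_\alpha X}g)(J_\alpha\cdot,\cdot)$ is independent of $\alpha$. Writing $X=J_\alpha(J_\alpha X)$ and cycling through $(\alpha,\beta,\gamma)$ then yields $(D_Xg)(Y,W)=(D_{J_\gamma X}g)(J_\gamma Y,W)$, i.e.\ $(D_X-D_{J_\gamma X}J_\gamma)^*=-D_X-D_{J_\gamma X}J_\gamma$; equating the skew-symmetric and symmetric parts forces both to vanish, hence $D_X^*=-D_X$. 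This symmetric-versus-skew trick is the missing ingredient in your sketch.
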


\begin{proof} We   first prove that the  HKT condition  is equivalent to the fact that  $ ({\mathfrak g}, \{ J_{\alpha} \} ,\, g)$ is HKT
and the operator $D_{J_{\alpha X}} J_{\alpha} - D_{J_{\beta X}} J_{\beta}$ is symmetric with respect to $g$, for any $\alpha$ and $\beta$.

By using \eqref{HKTcondition} we have that $(T_{D}  \, \frak g , \{
{\tilde J}_{\alpha } \}, \tilde g)$ is HKT if and only if
\begin{equation} \label{vanishing}
\begin{array} {l}
\tilde g  ([{\tilde J}_{\alpha} (X_1, X_2), {\tilde J}_{\alpha} (Y_1, Y_2)], (Z_1, Z_2)) + \tilde g ([{\tilde J}_{\alpha} (Y_1, Y_2), {\tilde J}_{\alpha}
Z],  (X_1, X_2))\\[4pt]
  + \tilde g  ([{\tilde J}_{\alpha} (Z_1, Z_2), {\tilde J}_{\alpha}  (X_1, X_2)], Y) - \tilde g   ([{\tilde J}_{\beta}  (X_1, X_2), {\tilde J}_{\beta} (Y_1, Y_2)], Z)\\[4pt]
   - \tilde g  ([{\tilde J}_{\beta} (Y_1, Y_2), {\tilde J}_{\beta} (Z_1, Z_2)],  (X_1, X_2))
   - \tilde g  ([{\tilde J}_{\beta} (Z_1, Z_2), {\tilde J}_{\beta} (X_1, X_2)], (Y_1, Y_2)) =0,
   \end{array}
\end{equation}
 for any $\alpha$ and $\beta$, $X_i, Y_i, Z_i \in {\mathfrak g}$, $i =1,2$. The first component  of the previous expression  vanishes if and only if $ (\{ J_{\alpha} \} ,\, g)$  is HKT on $\mathfrak g$. The  vanishing of the second component  with $X_2 =0$ and $Y_1 =0$  yields the following
 \begin{equation}\label{sym}
 g (D_{J_{\alpha X_1}} J_{\alpha} Y_2 - D_{J_{\beta X_1}} J_{\beta} Y_2 , Z_2) =
 g (Y_2, D_{J_{\alpha X_1}} J_{\alpha} Z_2 - D_{J_{\beta X_1}} J_{\beta}  Z_2),
 \end{equation}
 for any $X_1, Y_2, Z_2$. Conversely, if  $D_{J_{\alpha X}} J_{\alpha}  - D_{J_{\beta X}} J_{\beta}$ is symmetric with respect to $g$, the second component of \eqref{vanishing} is zero by direct computation.

 Therefore, from the relation
 $$
 (D_{ J_{\alpha} X} g) (J_{\alpha} Y, W) = g(D_{J_{\alpha} X} J_{\alpha} Y, W) - g(D_{J_{\alpha} X} J_{\alpha} W, Y) , \quad X, Y, W \in {\mathfrak
 g},
 $$
 we get that the operator $D_{J_{\alpha X}} J_{\alpha} - D_{J_{\beta X}} J_{\beta}$ is symmetric with respect to $g$ if and only if
 \begin{equation}
\label{covariantg}
 (D_{J_{\alpha} X} g) (J_{\alpha} Y, W) =  (D_{J_{\beta} X} g) (J_{\beta} Y, W),
\end{equation}
for any $ X, Y, W \in {\mathfrak g}$.

By \eqref{covariantg}, if $(\alpha, \beta, \gamma)$ is a cyclic
permutation of $(1,2, 3)$, we have
$$
(D_{ X} g) ( Y, W) = (D_{J_{\alpha} (J_{\alpha} X)} g) (J_{\alpha} J_{\alpha} Y, W) =  (D_{J_{\beta} (J_{\alpha} X)} g) (J_{\beta}  J_{\alpha} Y, W)   =  (D_{J_{\gamma} X} g) (J_{\gamma} Y, W).
$$
Thus
$$
g(D_X Y, W) + g(Y, D_X W)  = g(D_{J_{\gamma} X} J_{\gamma} Y, W) - g(Y, J_{\gamma} D_{J_{\gamma X}}  W),
$$
for any $X, Y, W \in {\mathfrak g}$ and $\gamma = 1,2,3$.

The above equality is equivalent to the condition  that the adjoint $(D_X - D_{J_{\gamma} X} J_{\gamma})^*$ of the operator $D_X - D_{J_{\gamma} X} J_{\gamma}$  with respect to $g$ coincides with   $-D_X - D_{J_{\gamma} X} J_{\gamma}$. Then
$$
(D_{J_{\gamma} X} J_{\gamma})^* - D_{J_{\gamma} X} J_{\gamma} = D_X + (D_X)^*.
$$
Since the left-hand side is skew-symmetric and the right one is
symmetric,  in particular  we get that $D_X = - (D_X)^*$ and thus
 the  theorem follows.

\end{proof}

In the case of the Obata connection $D=\nabla^O$ we get the
following result:
\begin{corol}
Let $({\frak g}, \{ J_{\alpha}  \}, g)$ be a hyper-Hermitian   Lie
algebra  with flat Obata connection $\nabla^O$. Then $(T_{\nabla^O}
\, {\mathfrak g}, \{  {\tilde J}_{\alpha} \}, \tilde g)$  is HKT if
and only if  $g$ is hyper-K\"ahler.
\end{corol}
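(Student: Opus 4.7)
The plan is to deduce the corollary directly from Theorem \ref{construction} applied to $D=\nabla^O$. Since $\nabla^O$ is flat by hypothesis and always satisfies $\nabla^O J_\alpha=0$, Theorem \ref{construction} tells us that $(T_{\nabla^O}\,{\mathfrak g}, \{\tilde J_\alpha\},\tilde g)$ is HKT if and only if $({\mathfrak g},\{J_\alpha\},g)$ is HKT and $\nabla^O g=0$. The task is therefore to show that this pair of conditions on $\mathfrak g$ is equivalent to $g$ being hyper-K\"ahler.

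For the nontrivial direction, suppose $\nabla^O g=0$. Then $\nabla^O$ is a torsion-free metric connection on $(\mathfrak g,g)$, and by uniqueness of the Levi-Civita connection we must have $\nabla^O=\nabla^g$. Since $\nabla^O J_\alpha=0$ for $\alpha=1,2,3$, we conclude $\nabla^g J_\alpha=0$, which by the characterisation recalled in the Preliminaries is precisely the hyper-K\"ahler condition on $({\mathfrak g},\{J_\alpha\},g)$.

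For the converse, assume $g$ is hyper-K\"ahler. Then $\nabla^g J_\alpha=0$ for each $\alpha$, so $\nabla^g$ is a torsion-free connection preserving the hypercomplex structure; by uniqueness of the Obata connection (see \cite{Ob}), $\nabla^g=\nabla^O$. In particular $\nabla^O g=\nabla^g g=0$, and the hyper-K\"ahler condition trivially implies the HKT condition on $\mathfrak g$ (with vanishing torsion $3$-form). Both hypotheses of Theorem \ref{construction} are then in force, and the corollary follows.

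There is no real obstacle here: the entire content is the observation that a flat torsion-free connection compatible with both $g$ and $\{J_\alpha\}$ must coincide with $\nabla^g$, so that requiring the Obata connection to be metric collapses the hyper-Hermitian structure to a hyper-K\"ahler one. The only care needed is to confirm that the hyper-K\"ahler hypothesis is indeed enough to guarantee flatness of $\nabla^O$ (so that Theorem \ref{construction} applies in the converse direction), but this is automatic since in the hyper-K\"ahler case $\nabla^O=\nabla^g$ and a left-invariant hyper-K\"ahler metric on a Lie group is flat, as recalled in the Preliminaries via \cite{AK}.
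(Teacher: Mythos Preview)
Your argument is correct and follows essentially the same route as the paper's one-line proof: Theorem \ref{construction} reduces the question to whether $\nabla^O g=0$, and since $\nabla^O$ is torsion-free this is equivalent to $\nabla^O=\nabla^g$, i.e.\ to the hyper-K\"ahler condition. Your closing remark invoking \cite{AK} for flatness is unnecessary, since flatness of $\nabla^O$ is already part of the corollary's standing hypothesis, but it does no harm.
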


\begin{proof}  Since $\nabla^O$ is  torsion free,  $\nabla^O$  is metric  if and only it coincides with the Levi-Civita connection.
\end{proof}

We will apply Theorem \ref{construction}  in the next sections in
order to construct new strong and weak  HKT manifolds.

The following result shows that the properties  of being
hyper-K\"ahler, HKT strong (resp. weak)  and balanced are preserved
by our construction.

\begin{prop}\label{properties} Let  $({\mathfrak g}, \{ J_{\alpha} \} ,\, g)$ be
an HKT Lie algebra and  $D$  a flat connection such that $Dg=0$ and
$D J_{\alpha} =0$, for any $\alpha$. Then:
\begin{enumerate}
\item[(i)] The HKT structure $( \{ {\tilde J}_{\alpha } \}, \tilde g)$ on $T_D\, \frak g$ is  hyper-K\"ahler if
and only if $ ( \{ J_{\alpha} \} ,\, g)$ is hyper-K\"ahler on $\frak
g$.

\item[(ii)]    $(\{ {\tilde J}_{\alpha} \}, \tilde g)$ is strong  (respectively  weak) if and only if
$(\{ J_{\alpha} \}, g)$ is strong (respectively weak).

\item[(iii)]   $(\{ {\tilde J}_{\alpha} \}, \tilde g)$ is balanced if and only if
$(\{ J_{\alpha} \}, g)$ is  balanced.

\end{enumerate}
\end{prop}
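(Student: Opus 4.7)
The plan is to reduce all three items to a single key identity. Writing $\pi_1 \colon T_D\,\mathfrak g \to \mathfrak g$ for the projection onto the first factor (which is a surjective Lie algebra homomorphism, since $[\tilde X,\tilde Y]$ has first component $[X_1,Y_1]$), I claim
\[
\tilde c \;=\; \pi_1^{\ast}\, c,
\]
where $c$ and $\tilde c$ are the Bismut torsion $3$-forms of $(\mathfrak g,\{J_\alpha\},g)$ and $(T_D\,\mathfrak g,\{\tilde J_\alpha\},\tilde g)$ respectively.

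To prove this identity I would compute $d\tilde\omega_1$ using the Chevalley--Eilenberg formula for a $2$-form. Since $\tilde\omega_1(\tilde X,\tilde Y) = \omega_1(X_1,Y_1)+\omega_1(X_2,Y_2)$ and the bracket \eqref{t_D} separates into first and second components, $d\tilde\omega_1(\tilde X,\tilde Y,\tilde Z)$ splits as the Chevalley--Eilenberg differential $d\omega_1(X_1,Y_1,Z_1)$ coming from the first-component pairing, plus a cyclic sum of terms of the form $-\omega_1(D_{X_1}Y_2 - D_{Y_1}X_2,\,Z_2)$ coming from the second-component pairing. The crucial point is that the second sum vanishes. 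Indeed, $Dg=0$ makes $D_X$ skew-symmetric with respect to $g$, and $DJ_1=0$ makes $D_X$ commute with $J_1$; together these imply $\omega_1(D_XA,B)=\omega_1(D_XB,A)$, and grouping the cyclic sum by $D_{X_1}$, $D_{Y_1}$, $D_{Z_1}$ then yields pairwise cancellation. Hence $d\tilde\omega_1 = \pi_1^{\ast} d\omega_1$. Combined with the standard Hermitian identity $c(X,Y,Z) = -d\omega_1(J_1X,J_1Y,J_1Z)$ (and its analogue on $T_D\,\mathfrak g$), together with the fact that $\tilde J_1$ acts diagonally as $(J_1,J_1)$, this yields $\tilde c = \pi_1^{\ast} c$.

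Granting the identity, each item follows quickly. For (i), hyper-K\"ahlerity is equivalent to the vanishing of the Bismut torsion $3$-form, so $\tilde c = 0 \iff c = 0$. For (ii), since $\pi_1$ is a surjective Lie algebra homomorphism, $\pi_1^{\ast}$ commutes with $d$ and is injective on forms, hence $d\tilde c = \pi_1^{\ast} dc$ is zero if and only if $dc$ is. For (iii), apply the Lee-form formula \eqref{Leeform} to the orthonormal basis $\{(e_i,0)\}\cup\{(0,e_i)\}$ of $T_D\,\mathfrak g$ built from an orthonormal basis $\{e_i\}$ of $\mathfrak g$; the contribution of the $(0,e_i)$ vectors vanishes because $\tilde c$ vanishes whenever any argument has zero first component, which gives $\tilde\theta(\tilde v) = \theta(v_1)$, and thus $\tilde\theta = 0 \iff \theta = 0$.

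The main obstacle is the cancellation of the $D$-contribution to $d\tilde\omega_1$: this is the only place where the hypotheses $Dg=0$ and $DJ_\alpha=0$ really enter, and the rest of the argument is bookkeeping around the pullback identity $\tilde c = \pi_1^{\ast} c$.
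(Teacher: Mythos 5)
Your proof is correct, and it rests on the same key identity as the paper's, namely $\tilde c((X_1,X_2),(Y_1,Y_2),(Z_1,Z_2))=c(X_1,Y_1,Z_1)$; but you reach it and exploit it somewhat differently. The paper derives this identity from the relation between the two Bismut connections, $\tilde g(\tilde\nabla^B_{(X_1,X_2)}(Y_1,Y_2),(Z_1,Z_2))=g(\nabla^B_{X_1}Y_1,Z_1)+g(D_{X_1}Y_2,Z_2)$, whereas you obtain it by computing the Chevalley--Eilenberg differential of $\tilde\omega_1=\pi_1^*\omega_1+(\text{second-component part})$ and showing that the $D$-contribution cancels because $(A,B)\mapsto\omega_1(D_XA,B)$ is symmetric when $D_X$ is $g$-skew and commutes with $J_1$; I checked this cancellation and it is correct, and it has the merit of isolating exactly where $Dg=0$ and $DJ_\alpha=0$ enter. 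For item (i) the routes genuinely diverge: the paper uses Lemma \ref{lem_Ob} (the hyper-K\"ahler condition read off from metricity of the Obata connection $\tilde\nabla^O$), while you use the equivalence, for an HKT structure, between hyper-K\"ahlerity and the vanishing of the torsion $3$-form, so that all three items follow from the single pullback identity together with injectivity of $\pi_1^*$. Your version is more unified and self-contained; the paper's use of the Obata connection in (i) has the side benefit of also controlling the Obata holonomy, which is what Remark \ref{higher} needs. Items (ii) and (iii) in your write-up coincide with the paper's argument, with the small improvement that you justify $d\tilde c=\pi_1^*dc$ by noting that $\pi_1$ is a surjective Lie algebra homomorphism, so $\pi_1^*$ commutes with $d$ and is injective, rather than by a separate direct computation.
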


\begin{proof}
It follows  from \eqref{twoObata} in Lemma \ref{lem_Ob}  that the
Obata connection $\tilde \nabla^O$ corresponding to $\{ {\tilde
J}_{\alpha} \}$ satisfies $\tilde \nabla^O \tilde g =0$ if and only
if $\nabla ^O g=0$, that is, $\tilde g$ is hyper-K\"ahler if and
only if $g$ is hyper-K\"ahler.

In order to show (ii) and (iii), by  a direct computation we have
that the Bismut connection $\tilde \nabla^B$ of the new  HKT
structure $(\{ {\tilde J}_{\alpha} \}, \tilde g)$ on $T_{D} \,
{\mathfrak g} $ is related to the Bismut connection $\nabla^B$ of
the HKT structure $(\{ J_{\alpha} \},  g)$ on ${\mathfrak g}$ by
$$
 \tilde g  (\tilde \nabla^B_{(X_1, X_2)} (Y_1, Y_2), (Z_1, Z_2)) =  g(\nabla^B_{X_1} Y_1, Z_1) + g (D_{X_1} Y_2, Z_2),
$$
for any $X_i, Y_i, Z_i \in {\mathfrak g}$, $i = 1,2,3$. Therefore,
if we denote  by $\tilde c$ and $c$ the torsions of the  HKT
structures on $T_{D} \, {\mathfrak g}$ and $\mathfrak g$,
respectively, we obtain
$$
\tilde c ((X_1, X_2), (Y_1, Y_2), (Z_1, Z_2)) = c (X_1, Y_1, Z_1),
$$
and
$$
d\tilde c  \,  ((X_1, X_2), (Y_1, Y_2), (Z_1, Z_2), (W_1, W_2)) = dc
(X_1, Y_1, Z_1, W_1).
$$
This shows that the strong (respectively weak) condition is
preserved.

For (iii),  we may  use as  orthonormal basis  of $(T_{D} \,
{\mathfrak g}, \tilde g)$ the basis $\{ (e_i, 0), (0, e_i), i = 1,
\ldots, 4n \}$, where $\{e_1, \ldots, e_{4n}\}$ is an orthonormal
basis of $({\mathfrak g}, g)$.  Then, by \eqref{Leeform} we get that
the Lee form $\tilde \theta$ of the new HKT structure on $T_{D} \,
{\mathfrak g}$ is given by $\tilde \theta=\theta \circ p$, where
$\theta$ is the Lee form of the old HKT structure on $\mathfrak g$
and $p: T_D \, \frak g \to \frak g$ is the orthogonal projection.
Indeed:
$$
\begin{array}{lcl}
\tilde \theta ((X_1, X_2))  &=& -  \frac{1}{2}  \sum_{i = 1}^{4n}
{\tilde c} ( (J_1 X_1, J_1 X_2), (e_i, 0), (J_1 e_i, 0))
\\ && - \frac{1}{2} \sum_{i = 1}^{4n} \tilde c(  (J_1 X_1,  J_1X_2), (0, e_i), (0, J_1 e_i))\\[3pt]
& = &  -\frac 12  \sum_{i = 1}^{4 n} c (J_1 X_1, e_i, J_1 e_i)  =
\theta (X_1),
\end{array}
$$
for any $(X_1, X_2) \in T_D \, {\mathfrak g}$.

\end{proof}

\begin{remark}\label{higher}
The   construction of HKT structures on tangent Lie algebras given
in Theorem \ref{construction} can be iterated, since if one
considers on the HKT Lie algebra $(T_{D } \, \frak g, \{ {\tilde
J}_{\alpha} \}, \tilde g)$ the connection $\tilde D$ defined by
$$
\tilde D_{(X_1, X_2)} (Y_1, Y_2) = (D_{X_1} Y_1, D_{X_1} Y_2),
$$
$\tilde D$ is hyper-Hermitian and flat, and therefore $T_{\tilde D}
(T_{D} \, \frak g)$ admits an HKT structure (compare with
\cite{BD}). Moreover, starting with a balanced $4n$-dimensional HKT
Lie algebra $({\mathfrak g}, \{ J_{\alpha} \} ,\, g)$ (see, for
instance, \S\ref{new_ex}), Proposition~\ref{properties} implies that
the successive tangent HKT Lie algebras are balanced. Since balanced
HKT structures have holonomy in $SL(m,\Bbb H )$, then  the HKT
structures on the
 simply connected Lie groups corresponding to the  tangent Lie algebras and on any of their
compact quotients (provided these exist) have holonomy in $SL(m,\Bbb
H )$ for some $m\geq n$.
\end{remark}

\

Let $({\mathfrak g}, \{ J_{\alpha} \} ,\, g)$ be a hyper-Hermitian
$4n$-dimensional Lie algebra and  $D$~a~flat connection on $\frak
g$. The following conditions are equivalent:
\begin{enumerate}
 \item $Dg =0$ and $D J_{\alpha}=0, \; \alpha =1,2,3$;
\item
    for any $X
\in \frak g$,  $D_X $ is skew-symmetric with respect to $g$ and
commutes with $J_{\alpha}$ for each $\alpha$;
 \item the
map $D : \frak g \to \frak{sp} (n)$ is a Lie algebra homomorphism.
\end{enumerate} Therefore, Theorem~\ref{construction} can be
rephrased as follows:

\begin{corol} \label{homomorphism} Let  $({\mathfrak g}, \{ J_{\alpha} \} ,\, g)$
be a  hyper-Hermitian $4n$-dimensional Lie algebra and  $D$~a~flat
connection such that $D   J_{\alpha} =0$, for any $\alpha$. The
hyper-Hermitian structure $(\{ {\tilde J}_{\alpha} \}, \tilde g)$ on
the  tangent algebra  $T_{D} \,  \frak g$  is HKT  if and only if
$({\mathfrak g},\{ J_{\alpha} \}, g)$ is HKT  and $D: \frak g \to
\frak{sp} (n)$ is a Lie algebra homomorphism.
\end{corol}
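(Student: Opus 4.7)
The plan is to deduce this corollary directly from Theorem \ref{construction} by verifying the three-way equivalence of the conditions listed immediately before the statement, since Theorem \ref{construction} already furnishes the HKT characterisation in the form ``$\mathfrak g$ is HKT and $Dg = 0$''.

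First I would translate the differential conditions $Dg = 0$ and $DJ_\alpha = 0$ into pointwise algebraic conditions on the endomorphisms $D_X : \mathfrak g \to \mathfrak g$. Viewing $D$ as a linear connection on the vector space $\mathfrak g$, the identity $Dg = 0$ unfolds to $g(D_X Y, Z) + g(Y, D_X Z) = 0$ for all $X, Y, Z \in \mathfrak g$, i.e.\ each $D_X$ is skew-adjoint with respect to $g$; similarly $DJ_\alpha = 0$ amounts to $D_X J_\alpha = J_\alpha D_X$, i.e.\ $D_X$ commutes with each $J_\alpha$. This establishes the equivalence of (1) and (2).

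To bridge (2) and (3) I would invoke the standard description of $\mathfrak{sp}(n)$ as the subalgebra of $\mathfrak{gl}(4n, \mathbb R)$ consisting of endomorphisms that are simultaneously $g$-skew-adjoint and $\mathbb H$-linear (equivalently, commute with $J_1, J_2, J_3$); under an orthonormal quaternionic frame identifying $\mathfrak g \cong \mathbb H^n$, this matches condition (2) verbatim. Combined with the flatness of $D$, which as observed at the opening of the section already forces $D : \mathfrak g \to \mathfrak{gl}(n, \mathbb H)$ to be a Lie algebra homomorphism, this yields the equivalence of (2) and (3). Applying Theorem \ref{construction} then gives the claim. There is no genuine obstacle: Theorem \ref{construction} carries all of the analytic content, and the corollary is merely its restatement in the more transparent representation-theoretic language of homomorphisms into $\mathfrak{sp}(n)$.
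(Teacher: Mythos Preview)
Your proposal is correct and follows exactly the paper's approach: the paper states the three-way equivalence (1)--(3) immediately before the corollary and then simply notes that Theorem~\ref{construction} ``can be rephrased as follows,'' which is precisely what you do (with slightly more detail on why the equivalences hold).
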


\

\begin{remark} If $G$ is a simply connected Lie group with Lie algebra $\frak g$, there is a one-to-one correspondence
between Lie algebra homomorphisms $D: \frak g \to \frak{sp} (n)$ and
quaternionic unitary representation of $G$ on $\H^n$.\end{remark}

\

The previous construction can be generalised by replacing the flat
connection $D$ on $\mathfrak g$ by a quaternionic representation of
$\mathfrak g$ on $\Bbb H^q$. In fact,
  given a $4n$-dimensional    Lie algebra ${\mathfrak g}$ and a Lie algebra  homomorphism
$$
\rho:  {\mathfrak g} \to {\mathfrak {gl}} (q, \H),
$$
 instead of considering  the tangent Lie algebra  of $\mathfrak g$, one can define on $T_{\rho} \, {\mathfrak g} : = \mathfrak g \ltimes _{\rho} \H^q$ the
 following Lie bracket:
 \begin{equation}\label{t_rho}
 [(X, V), (Y, W)] = ([X, Y], \rho(X) W - \rho(Y) V),
 \end{equation}
 for  any $X, Y \in \mathfrak g$ and $V, W \in \H^q$.

   Given a hypercomplex structure $\{J_{\alpha} \}$ on $\mathfrak g$ we define $\{ \tilde J_{\alpha} \}$ on  $T_{\rho} \, {\mathfrak g}$
 as follows:
 \begin{equation}\label{hcx}
 \tilde J_{\alpha} (X, V) = (J_{\alpha} X, i_{\alpha} V),
 \end{equation}
 with  $i_ 1 = i, \, i_2 =j,\, i_3 = k$, where  $i_{\alpha} V$ denotes  left quaternionic multiplication by $i_{\alpha}$  on $\H^q$.
 The integrability of $\tilde J_{\alpha}$ follows from \cite[Proposition~3.3]{BD}.

 If  $q = n$,  the  Lie algebra $T_{\rho} \, \mathfrak g$  coincides  with the tangent Lie algebra $T_D \mathfrak g$
 associated to the connection $D = \rho$.

 If $(\mathfrak g ,\{ J_{\alpha} \} ,\, g)$ is hyper-Hermitian, the
Lie algebra $T_{\rho} \, {\mathfrak g}$ becomes hyper-Hermitian with
the hypercomplex structure given by \eqref{hcx} and  the inner
product $\tilde g$ induced by $g$ and the natural inner product on
$\H^q$ such that $\frak g$ is orthogonal to $\H^q$.

\

\begin{remark}\label{cor_t_rho} We point out that Corollary \ref{homomorphism}  goes through with the obvious changes for the Lie algebra $T_{\rho}\,
\frak g$, and it turns out that the HKT stucture $(\{ {\tilde
J}_{\alpha} \}, \tilde g)$ on $T_{\rho}\, \frak g$ is strong
(respectively, weak, hyper-K\"ahler, balanced) if and only if $(\{ {
J}_{\alpha} \},  g)$ is strong (respectively, weak, hyper-K\"ahler,
balanced). The analogue of
 Theorem~\ref{condab} is also true for
$T_{\rho}\, \frak g$. In fact, given a hypercomplex structure $\{
J_{\alpha} \}$ on $\frak g$, consider the induced hypercomplex
structure $\{ {\tilde J}_{\alpha} \}$ on $T_{\rho}\, \frak g$ as in
\eqref{hcx}. An analogous proof to that of Theorem~\ref{condab}
gives that
 $\{ {\tilde J}_{\alpha} \}$  is
abelian  on $T_{\rho}\, \frak g$ if and only if $\{ J_{\alpha} \}$
is abelian on $\mathfrak g$ and $\rho =0$, i.e. $T_{\rho} \,
{\mathfrak g} = {\mathfrak g} \oplus \H^q$.

\end{remark}

\

 Let $G$ be a  connected Lie group with Lie algebra $\mathfrak g$
 and   $\pi: G \to GL (q, \H)$
  a  quaternionic representation of $G$ on $\Bbb H^q$.
 The following result
is a
 generalisation of Theorem \ref{construction} and its proof  follows by using analogous arguments, replacing
    the connection $D$ with the representation $\rho=(d\pi)_e$ of $\mathfrak
    g$. This may be
regarded as the left-invariant counterpart of \cite[Theorem
7.2]{Ve2}, where it has been shown that the natural metric on the
total space of a hyperholomorphic bundle over an HKT manifold is
HKT.

 \begin{teo} \label{generalization}
Let  $(\{ J_{\alpha} \} ,\, g)$  be a left invariant hyper-Hermitian
structure on a connected Lie group~$G$ and  $\pi : G \to GL (q, \H)$
 a quaternionic representation. Then $(T_{\pi} \, G , \{
{\tilde J}_{\alpha } \}, \tilde g)$ is  HKT if and only if $ (G, \{
J_{\alpha} \} ,\, g)$ is HKT  and  $\pi$ is unitary, that is, $\pi
(G)$ is contained in $Sp (q)$.

Moreover,
 the new HKT structure  $(\{ {\tilde J}_{\alpha} \}, \tilde g)$ is strong  (respectively  weak, hyper-K\"ahler, balanced) if and only if
$(\{ J_{\alpha} \}, g)$ is strong (respectively weak,
hyper-K\"ahler, balanced).
 \end{teo}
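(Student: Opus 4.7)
The plan is to mimic the proofs of Theorem~\ref{construction} and Proposition~\ref{properties}, with the representation $\rho=(d\pi)_e:\mathfrak g\to\mathfrak{gl}(q,\H)$ playing the role of the flat connection $D$ and the vector space $\H^q$ playing the role of the ``second copy of $\mathfrak g$''. First I would pass from the Lie-group statement to the infinitesimal one: a quaternionic representation $\pi:G\to GL(q,\H)$ is unitary, i.e.\ $\pi(G)\subset Sp(q)$, if and only if $\rho(\mathfrak g)\subset\mathfrak{sp}(q)$, so it suffices to prove the corresponding statement for $(T_\rho\,\mathfrak g,\{\tilde J_\alpha\},\tilde g)$ and the Lie algebra homomorphism $\rho$.

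Next I would verify the crucial elementary fact: since $\rho$ takes values in $\mathfrak{gl}(q,\H)$, each operator $\rho(X)$ automatically commutes with left multiplication by $i_\alpha$ on $\H^q$. This plays exactly the role of the identity $DJ_\alpha=0$ in the proof of Theorem~\ref{construction}, and it makes the integrability of $\tilde J_\alpha$ transparent (as cited from \cite{BD}). Consequently, when one substitutes the bracket \eqref{t_rho} and the hypercomplex structure \eqref{hcx} into the HKT criterion \eqref{HKTcondition} applied to vectors $(X_i,V_i)\in T_\rho\,\mathfrak g$, the ``pure $\mathfrak g$'' terms reduce exactly to the HKT condition on $(\{J_\alpha\},g)$, while the mixed terms (those involving $V_i$) reduce, just as in the proof of Theorem~\ref{construction}, to the requirement that the operator $\rho(J_\alpha X)i_\alpha-\rho(J_\beta X)i_\beta$ be symmetric with respect to the natural inner product on $\H^q$, for all $X\in\mathfrak g$. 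Using once more that $\rho(\cdot)$ commutes with each $i_\alpha$, this reduces to the single condition that $\rho(X)$ be skew-symmetric for every $X$, i.e.\ $\rho(\mathfrak g)\subset\mathfrak{sp}(q)$, as desired.

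For the second part of the statement I would compute the Bismut connection of $(T_\rho\,\mathfrak g,\{\tilde J_\alpha\},\tilde g)$ directly from \eqref{Bismut}, obtaining
\[
\tilde g\bigl(\tilde\nabla^B_{(X,V)}(Y,W),(Z,U)\bigr)=g(\nabla^B_{X}Y,Z)+\langle\rho(X)W,U\rangle,
\]
where $\langle\cdot,\cdot\rangle$ is the natural inner product on $\H^q$. The skew-symmetry of $\rho(X)$ forces the $\H^q$-contribution to the torsion $3$-form to vanish, so
\[
\tilde c\bigl((X_1,V_1),(Y_1,V_2),(Z_1,V_3)\bigr)=c(X_1,Y_1,Z_1),
\]
and the analogous identity then holds for $d\tilde c$. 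This immediately gives the strong/weak dichotomy. The balanced case is handled exactly as in Proposition~\ref{properties}(iii): using an orthonormal basis of $\tilde g$ obtained from an orthonormal basis $\{e_i\}$ of $g$ together with any orthonormal basis of $\H^q$, the formula \eqref{Leeform} and the shape of $\tilde c$ above yield $\tilde\theta=\theta\circ p$, where $p:T_\rho\,\mathfrak g\to\mathfrak g$ is the orthogonal projection. Finally, for the hyper-K\"ahler characterization one checks that the Obata connection of $\{\tilde J_\alpha\}$ is $\tilde\nabla^O_{(X,V)}(Y,W)=(\nabla^O_XY,\rho(X)W)$, exactly as in Lemma~\ref{lem_Ob}, so $\tilde g$ is $\tilde\nabla^O$-parallel iff $\nabla^Og=0$ and $\rho(X)$ is skew-symmetric for each $X$; the latter is guaranteed by the HKT hypothesis already established.

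The only non-routine step is the reduction of the mixed components of the HKT identity to skew-symmetry of $\rho$; the argument is formally identical to the manipulation in Theorem~\ref{construction} that produces $(D_{J_\gamma X}J_\gamma)^\ast-D_{J_\gamma X}J_\gamma=D_X+(D_X)^\ast$, and concludes by splitting into symmetric and skew parts, but one must be careful with the $\H$-linearity conventions to ensure that the identities $i_\alpha^\ast=-i_\alpha$ on $\H^q$ and $[\rho(X),i_\alpha]=0$ are used consistently. Everything else is bookkeeping parallel to Theorem~\ref{construction} and Proposition~\ref{properties}.
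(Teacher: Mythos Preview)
Your proposal is correct and follows precisely the route the paper itself indicates: it states that the proof of Theorem~\ref{generalization} ``follows by using analogous arguments, replacing the connection $D$ with the representation $\rho=(d\pi)_e$,'' and refers to Remark~\ref{cor_t_rho} for the strong/weak/hyper-K\"ahler/balanced claims. Your outline reproduces exactly this adaptation of Theorem~\ref{construction} and Proposition~\ref{properties}, with the key observation that $\rho(X)$ commutes with each $i_\alpha$ standing in for $DJ_\alpha=0$.
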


\

We show next that Theorem~\ref{generalization} is a useful tool for
constructing new examples of compact HKT manifolds with holonomy in
$SL(n, \Bbb H)$. In fact, we obtain as an application of the results
in this section  a new compact HKT manifold which is balanced,
therefore its holonomy is contained in $SL(3, \Bbb H )$
\cite[Remark~4.9]{Ve4}. We point out that this manifold is the first
known example of a compact
$12$-dimensional HKT manifold with holonomy in $SL(n, \Bbb H)$ which
is not a nilmanifold (see \cite[Remark~4.4]{Ve4}).

\subsection{A balanced HKT solvmanifold.} \label{new_ex}
Let $(\frak g, \{ J_{\alpha} \} ,\, g)$ be the $8$-dimensional HKT
Lie algebra with basis $\{ e_1, \dots , e_8 \} $ and dual basis $\{
e^1, \dots , e^8 \} $ of $\frak g^*$, such that the Lie bracket is
given as follows:
\begin{align*}[e_5,e_k]&=e_{k-4},\;
k=6,7,8,&  [e_6,e_7]&= -e_4 \\
[e_6,e_8]&= e_3, &   [e_7,e_8]&= -e_2,
\end{align*}
with inner product \[ g=\sum_{i=1}^8 (e^i)^2,  \] and hypercomplex
structure
\begin{align*} J_1e_1&=e_2,& \quad J_1e_3 &=e_4, &\quad J_1e_5&=e_6, &\quad J_1e_7& =e_8,
&\quad J_1 ^2 &=-\text{id}, \\
J_2e_1&=e_3,& \quad J_2e_2 &=-e_4, &\quad J_2e_5&=e_7, &\quad
J_2e_6& =-e_8,
&\quad J_2 ^2 &=-\text{id}, \\
J_3e_1&=e_4,& \quad J_3e_2 &=e_3, &\quad J_3e_5&=e_8, &\quad J_3e_6&
=e_7, &\quad J_3 ^2 &=-\text{id}.
\end{align*}
We observe that span $\{e_2, \dots , e_8\}$ with the inner product
induced by $g$ is a $2$-step nilpotent Lie algebra of Heisenberg
type (see \cite{K}) and $\{ J_{\alpha}\}$ is an abelian hypercomplex
structure on $\frak g$. Since the inner product $g$ is
hyper-Hermitian, it follows from \cite{DF} that $(\frak g, \{
J_{\alpha} \} ,\, g)$ is an HKT Lie algebra. Moreover,
\cite[Proposition~4.11]{BDV} implies that $g$ is balanced. We fix a
basis $\{ f_1, \dots , f_4\}$ of $\Bbb R ^4$ and consider the
endomorphism $\rho : \frak g \to \text{End}(\Bbb R^4)$ defined by:
\[ \rho(e_1)=\begin{pmatrix} 0&-1& & \\
1&0& & \\
 & &0&1\\
 & &-1&0
\end{pmatrix}, \qquad \rho (e_k)=0, \; k=2, \dots , 8.
 \]
We extend $\{J_{\alpha}\}$ to a hypercomplex structure $\{\tilde
J_{\alpha}\}$ on $T_{\rho}\, \frak g$ as follows:
\begin{align*} \tilde J_1f_1&=f_2,& \quad \tilde J_1f_3 &=f_4,
&\quad \tilde J_1 ^2 &=-\text{id}, \\
\tilde J_2f_1&=f_3,& \quad\tilde J_2f_2 &=-f_4, &\quad  \tilde J_2 ^2 &=-\text{id}, \\
\tilde J_3f_1&=f_4,& \quad \tilde J_3f_2 &=f_3, &\quad  \tilde J_3
^2 &=-\text{id}.
\end{align*}
 Let $\tilde g$ be the inner product on $T_{\rho}\, \frak g$ obtained by extending  $g$ in the obvious way. It follows that $\rho: \frak g\to
\frak{sp}(1)$ is a Lie algebra homomorphism, therefore, $(T_{\rho}\,
\frak g ,\{\tilde J_{\alpha}\}, \tilde g)$ is a $12$-dimensional
balanced HKT Lie algebra (see Remark~\ref{cor_t_rho}). Let
$S=N\ltimes \Bbb R ^4$ be the simply connected solvable Lie group
with Lie algebra $T_{\rho}\, \frak g$, where $N$ is the simply
connected $2$-step nilpotent Lie group with Lie algebra $\frak g$.
It is well known that $N$ admits a lattice $\Gamma _1$ \cite{K, R},
which can be chosen to be compatible with $\{ J_{\alpha}\}$, and
there exists a lattice $\Gamma_2$ in $\Bbb R ^4$ such that the
action of $N$ on $\Bbb R ^4$ is compatible with $\Gamma_2$.
Therefore, setting $\Gamma:= \Gamma_1 \ltimes \Gamma_2$, $\Gamma$ is
a lattice in $S$ and $\Gamma\backslash S$ is a solvmanifold carrying
a balanced HKT structure, hence it has holonomy in $SL(3, \Bbb H)$.
This manifold is an $S^1$-bundle over a $2$-step nilmanifold. We
point out that  $\{\tilde J_{\alpha}\}$ is not abelian on
$T_{\rho}\, \frak g$, therefore this  provides a counterexample to
the analogue of \cite[Theorem~4.6]{BDV} for HKT solvmanifolds, since
$\tilde g$ is HKT but the corresponding hypercomplex structure is
not abelian.

\

\section{ Weak HKT structures  on $8$-dimensional tangent Lie
algebras}\label{4dim_ex}

In this section we  apply  Corollary~\ref{homomorphism}  to all
$4$-dimensional non-reductuve Lie algebras $\mathfrak g$ admitting
hypercomplex structures to obtain HKT structures on the
corresponding tangent Lie algebras $T_D\, \frak  g$. We will proceed
as follows. Given an HKT structure $(\{ J_{\alpha}\},g)$ on $\frak
g$, we will consider three endomorphisms $\{J_1',J_2',J_3'\}$ which
form a basis of $\frak{sp}(1)$ and we will provide an explicit
homomorphism $D:\frak g \to \frak{sp}(1)$ by expressing $D_X$ as a
linear combination of $\{ J_{\alpha}'\}$ for any $X$ in a basis of
$\frak g$.

Given a $4$-dimensional Lie algebra $\frak g$ with basis $\{ e_1,
\dots , e_4 \} $, let $\{ e^1, \dots , e^4 \} $ be the basis of
$\frak g^*$ dual to $\{ e_1, \dots , e_4 \} $. By \cite{Ba} a
non-abelian $4$-dimensional real Lie algebra admitting a
hypercomplex structure is isomorphic to one of the following Lie
algebras:
\begin{enumerate}
\item ${\mathfrak {sp}} (1) \oplus  {\mathfrak u} (1)$;
\item ${\mathfrak {aff}} (\C) = (-e^{13} + e^{24}, - e^{23} - e^{14}, 0, 0)$;
\item $(0, - e^{12}, - e^{13}, - e^{14})$;
\item $(0, - \frac{1}{2} e^{12}, - \frac{1}{2} e^{13}, - e^{23} - e^{14})$,
\end{enumerate}
where   for instance $(-e^{13} + e^{24}, - e^{23} - e^{14}, 0, 0)$ denotes the Lie algebra with non-zero Lie backets
$$
\begin{array}{l}
[e_1, e_3] = e_1  = - [e_2, e_4],\\[4pt]
 [e_2, e_3] = e_2 = [e_1, e_4].
\end{array}
$$

The Lie algebra ${\mathfrak {sp}} (1) \oplus  {\mathfrak u} (1)$ is
the only reductive one in the above list and admits by \cite{GP} a
strong HKT structure, that we will consider in the next section in
order to construct new strong HKT examples.

 The Lie algebra $(2)$ is the only $4$-dimensional one admitting an abelian
hypercomplex structure. Moreover, any hypercomplex structure on
${\mathfrak {aff}} (\C)$ is abelian \cite{Ba}. Take, for instance:
\begin{equation} \label{hyperstruct}
\begin{array}{ll}
J_1 e_1 = - e_4, &\, J_1 e_2 = e_3,\\[4pt]
J_2 e_1 = e_2, &\, J_2 e_3 = - e_4 .
\end{array}
\end{equation}
If we consider the inner product
\begin{equation} \label{metric}
g = \sum_{i = 1}^4  (e^i)^2,
\end{equation}
 it turns out that $(\{ { J}_{\alpha} \},  g)$ defines a weak HKT structure on
${\mathfrak {aff}} (\C)$. There is a Lie algebra  homomorphism
$D:{\mathfrak {aff}} (\C) \to {\mathfrak {sp}} (1)$ defined as
follows
$$
\begin{array}{l}
D_{e_1} = D_{e_2 } =0,\\[4pt]
D_{e_3} = a_1 J'_1 + a_2 J'_2 + a_3 J'_3, \qquad D_{e_4} =
bD_{e_3},
\end{array}
$$
where $a_i, i =1,2,3$, $b$ are real numbers and the endomorphisms
$J'_{\alpha}$ are given by
\begin{equation} \label{endom}
\begin{array}{lll}
J_1' e_1 = e_4, \, &J_1' e_2 = e_3, \, &{J}_1'^2 = - {\mbox {id}},\\[4pt]
J_2' e_1 = -e_2, \, &J_2' e_3 = - e_4, \, &J_2'^2 = -{\mbox {id}},\\[4pt]
J_3'  e_1 = -e_3, \,& J_3'  e_2= e_4, \,, &J_3'^2 = -{\mbox {id}}.
\end{array}
\end{equation}
Corollary~\ref{homomorphism} implies that the induced
hyper-Hermitian structure $(\{ {\tilde J}_{\alpha} \}, \tilde g)$ is
weak HKT  on $T_{D} \, {\mathfrak {aff}} (\C)$ and the hypercomplex
structure $\{ {\tilde J}_{\alpha} \}$ is not abelian (see
Theorem~\ref{condab}).

For the Lie algebra $(3)$   we can consider the  weak HKT  structure
$(\{ J_{\alpha} \}, g)$  defined by \eqref{hyperstruct} and
\eqref{metric}. In this case, we have the following   homomorphism
$D$:
\begin{equation} \label{exprj'}
\begin{array}{l}
D_{e_1} = a_1 J'_1 + a_2 J'_2 + a_3 J'_3,\\[4pt]
D_{e_2} = D_{e_3}  = D_{e_4} =  0,
\end{array}
\end{equation}
where  $a_i, i =1,2,3$ are real numbers and  the endomorphisms
$J'_{\alpha}$ are as in \eqref{endom}.

For  the last Lie algebra $(4)$  one takes the hyper-Hermitian structure  $(\{ J_{\alpha} \}, g)$ with
$$
\begin{array}{ll}
J_1 e_1 = e_4, &\, J_1 e_2 = -e_3,\\[4pt]
J_2 e_1 =  \frac {\sqrt{2}} {2} \, e_2, \quad  & \, J_2 e_4 =  \frac
{\sqrt{2}} {2} \, e_3,
\end{array}
$$
and $g$ given by
\[  g=   (e^1)^2 +(e^4)^2 + 2\left((e^2)^2 +(e^3)^2\right).\]

 Let  $D$ be the homomorphism defined in
\eqref{exprj'} with  the following endomorphisms $J'_{\alpha}\,$:
$$
\begin{array}{lll}
J_1' \,  e_1 = - e_4, \, &J_1'  e_2 =-  e_3, \, &J_1'^2 = - {\mbox {id}}\\[4pt]
J_2' e_1 = - \frac {\sqrt{2}} {2} \, e_2,, \, &J_2'  e_4 =\frac {\sqrt{2}} {2} \, e_3, \, &J_2'^2 = -{\mbox {id}},\\[4pt]
J_3' e_1 =\frac {\sqrt{2}} {2} \, e_3, \,& J_3'  e_4= \frac {\sqrt{2}} {2} \, e_2, \,, &J_3'^2 = -{\mbox {id}}.
\end{array}
$$
The induced hyper-Hermitian structure  $(\{ {\tilde J}_{\alpha} \},
\tilde g )$ on the tangent Lie algebra is weak HKT. We point out
that the Obata connection $\nabla ^O$ corresponding to $(\frak g ,
\{ J_{\alpha} \})$ has holonomy $\frak{sl}(1, \Bbb H )$, therefore,
$\frak{hol} (\tilde \nabla ^O)=\frak{sl}(1, \Bbb H )$ (Lemma
\ref{lem_Ob}). In fact, the Obata connection $\nabla^O$ has been
calculated  in \cite{Ba}:
\[  \nabla^O_{e_1}  =\frac34 \, \text{id}, \quad \nabla^O_{e_2}
=-\frac{\sqrt{2}}4 \, J_2', \quad \nabla^O_{e_3} =\frac{\sqrt{2}}4
\,  J_3',\quad \nabla^O_{e_4} =\frac{1}4 \, J_1',
\]
hence, $\frak{hol} ( \nabla ^O)=\text{span} \{ J_{\alpha}' \}\cong
\frak{sl}(1, \Bbb H )$. In particular, the canonical bundles of the
corresponding simply connected Lie groups (with respect to
$J_{\alpha}$ and $\tilde J_{\alpha}$, respectively, for any
$\alpha$) are holomorphically trivial (see \cite{Ve}). However, we
observe that $g$ (hence $\tilde g$) is not balanced.


 \

\section{New    compact  strong HKT manifolds  }

In \cite{Jo} Joyce showed that  for  any compact semisimple  Lie
algebra ${\mathfrak s}$   there exists a non-negative  integer $k$
such that ${\mathfrak s} \oplus k \, {\mathfrak {u}} (1)$ admits a
hypercomplex structure $\{ J_{\alpha} \}$, constructed by using the
decomposition of  ${\mathfrak s}$ in terms of certain  ${\mathfrak
{sp}}(1)$ subalgebras. By \cite{OP, GP}  there exists a compatible
hyper-Hermitian inner product  $g$ such that its restriction    to
$\mathfrak s$ is equal to  the opposite of the Killing-Cartan form.
Moreover,  the hyper-Hermitian structure $(\{ J_{\alpha} \}, g)$ is
strong HKT and induces  a left-invariant HKT structure  on each Lie
group  $G$ with Lie algebra ${\mathfrak s} \oplus k \, {\mathfrak
{u}} (1)$.

In this section we  apply Theorem~\ref{construction} to obtain HKT
structures on non-trivial extensions
$$T_{D} ( {\frak s} \oplus k \,{\mathfrak u} (1)) =  \left({\frak s}
\oplus k \, {\mathfrak u} (1) \right)\ltimes _D \H^{n},$$ where
 $ 4n =\dim {\mathfrak s}  + k$ and $D$ is an appropriate flat hyper-Hermitian  connection on ${\frak s} \oplus  k \, {\mathfrak u} (1)$.
For instance, one could consider  $D=\nabla^ O$, the Obata
connection, but $\nabla^O$ is not hyper-Hermitian, since this would
imply that $g$ is hyper-K\"ahler on ${\frak s} \oplus  k \,
{\mathfrak u} (1)$, which is impossible. If we consider
$D=\nabla^B$,  the Bismut connection  associated to  $(\{ J_{\alpha}
\}, g)$,  $D$ is hyper-Hermitian and flat, but in this case it is
identically zero and  therefore  $T_{\nabla^B} ( {\frak s} \oplus  k
\,{\mathfrak u} (1))$ is  a trivial central extension of ${\mathfrak
s} \oplus k \, {\mathfrak {u}} (1)$.


In order  to get non-trivial extensions we will investigate the
existence of other flat hyper-Hermitian  connections on ${\mathfrak
s} \oplus k \, {\mathfrak {u}} (1)$. The simplest case is the
$4$-dimensional  reductive Lie algebra ${\mathfrak {sp}}  (1) \oplus
{\mathfrak u} (1)$.

 Starting with  the  standard  strong HKT structure  $(\{ J_{\alpha} \}, g)$  on  the
 Lie algebra  ${\mathfrak {sp}}  (1) \oplus {\mathfrak u} (1)$ considered in \cite{GP}, we will show that ${\mathfrak {sp}}  (1) \oplus {\mathfrak u} (1)$ admits a flat hyper-Hermitian connection $D$, essentially defined by the projection homomorphism from ${\mathfrak {sp}}  (1) \oplus {\mathfrak u} (1)$ to ${\mathfrak {sp}} (1)$.

 Let $\{ e_1, e_2, e_3
 , e_4 \}$  be the  standard  basis of  ${\mathfrak {sp}}  (1) \oplus {\mathfrak u} (1)$ with non zero Lie brackets
 $$
 [e_1, e_2] = e_3, \quad  [e_2, e_3] = e_1, \quad  [e_3, e_1] = e_2.
 $$
 Consider the hyper-Hermitian  structure given by the hypercomplex structure
 $$
 \begin{array} {l}
 J_1 e_1 = - e_4, \quad J_1 e_2 = e_3,\\
J_2 e_1 = - e_3, \quad J_2 e_2 = -e_4
\end{array}
$$
and by the inner product $g$ such that the basis $\{ e_1, \ldots,
e_4 \}$  is orthonormal. It follows by  \cite{GP} that $(\{
J_{\alpha} \}, g)$ is strong HKT.

By  \cite{Ba} the centralizer of $\{ J_1, J_2 \}$ in ${\mbox{End}}
({\mathfrak {sp}}  (1) \oplus {\mathfrak u} (1))$, which is
isomorphic to ${\mathfrak {gl}} (1, \H)$, is  spanned by  the
identity ${\mbox {id}}$  and by  the following endomorphisms:
$$
\begin{array}{lll}
J'_1 e_1 =  e_4, &\, J'_1 e_2 = e_3, &\, {J'_1}^2 = - {\mbox {id}},\\
J'_2 e_1 =  -e_3, &\, J'_2 e_2 = e_4, &\, {J'_1}^2 = - {\mbox {id}},\\
J'_3 e_1 =  e_2, &\, J'_3 e_3 = e_4, &\, {J'_3}^2 = - {\mbox {id}}.
\end{array}
$$
Therefore, the connection $D$ on ${\mathfrak {sp}}  (1) \oplus
{\mathfrak u} (1)$ defined by
$$
D_{e_1} =  \frac 12 J'_1,  \quad D_{e_2} =  \frac 12 J'_2, \quad
D_{e_3} = \frac 12 J'_3, \quad D_{e_4} =  0,
$$
is  flat and hyper-Hermitian with respect  to $g$.

In view of Corollary \ref{homomorphism}, the connection $D$
corresponds to the projection
\begin{equation} \label{sp1}
{\frak {sp} }(1) \oplus \R \, e_4 \rightarrow {\frak {sp} }(1).
\end{equation}

In this way  we can apply  Theorem \ref{construction}  in order to get a new $8$-dimensional Lie algebra $T_{D}  \,({\mathfrak {sp}}  (1) \oplus {\mathfrak u} (1))$ with structure equations
$$
\begin{array}{l}
[e_1, e_2] = e_3, \quad  [e_2, e_3] = e_1, \quad  [e_3, e_1] = e_2,\\[4pt]
{[e_1, e_8] =  - [e_2, e_7] =   [e_3, e_6]  = - \frac 12 e_5},\\[4pt]
  { [e_1, e_7] =   [e_2, e_8] =  - [e_3, e_5]  = - \frac 12 e_6},\\[4pt]
  {[e_1, e_6] =  - [e_2, e_5] =  - [e_3, e_8]  = \frac 12 e_7}, \\[4pt]
{[e_1, e_5] =   [e_2, e_6] = - [e_3, e_7]  = \frac 12 e_8}.
\end{array}
$$
The induced  hypercomplex structure $\{ {\tilde J}_{\alpha} \}$  on
$T_{D} \, ({\mathfrak {sp}}  (1) \oplus {\mathfrak u} (1))$    (see
\eqref{hypercomplex})  and the inner product $\tilde g$ give a
left-invariant strong HKT structure  $(\{ {\tilde J}^+_{\alpha} \},
\tilde g)$  on the  simply connected Lie group with Lie algebra
$T_{D} \, {\mathfrak g}$. Such a Lie group is a product of $\R$ by
the $7$-dimensional Lie group $SU(2) \ltimes \R^4$. This
$7$-dimensional Lie group was already considered in \cite{FT}, where
it was shown that it admits a compact quotient $M^7$. Therefore, we
obtain a compact $8$-dimensional manifold $M^7 \times S^1$ admitting
a strong HKT structure with flat Obata  connection.
%

\begin{remark} In view of Remark~\ref{higher}, the previous construction can be iterated in higher dimensions and
this allows to find new compact examples of dimension $ 2^{n+2}$, for any $n \geq 1$.
\end{remark}

More in general, we can  consider the compact semisimple Lie algebra
${\mathfrak {sp}}(l)$. By   \cite{Jo, SSTV, GP}  it follows that
there exists   a strong HKT structure $(\{ J_{\alpha}  \} , g)$ on
the direct sum   ${\mathfrak {sp}} (l) \oplus  l \, {\mathfrak u}
(1)$.

 For any $l \geq 1$, it is  possible to construct a homomorphism
$$
{\mathfrak {sp}} (l) \oplus     l  \, {\mathfrak {u}} (1)  \rightarrow {\mathfrak {sp}} \left(\frac{l  (l + 1)}{2} \right)
$$
given  by
$$
(X, Y) \in   {\mathfrak {sp}} (l) \oplus  l  \, {\mathfrak {u}} (1)  \mapsto i(X),
$$
where the inclusion   $i:  {\mathfrak {sp}} (l) \hookrightarrow
{\mathfrak {sp}} \left(\frac{l (l + 1)}{2} \right)$ is a natural
injective homomorphism.  This generalises the  case $l=1$
considered previously (see \eqref{sp1}), where $i$ was the
identity of $ {\mathfrak {sp}}  (1)$.

Therefore, applying   Corollary  \ref{homomorphism},  we have the
following:

\begin{prop} For any $l \geq 1$, the  Lie algebra    ${\mathfrak {sp}} (l) \oplus l \,  {\mathfrak u} (1)$  with the strong  HKT structure
$(\{ J_{\alpha} \}, g)$  admits a flat hyper-Hermitian connection
$D$ and   the induced hyper-Hermitian structure on the  tangent
algebra $T_{D} ( {\mathfrak {sp}} (l) \oplus l \,  {\mathfrak u}
(1))$ is strong HKT.
\end{prop}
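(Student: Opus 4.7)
The plan is to produce the connection $D$ as the composition of a projection with a natural inclusion into $\mathfrak{sp}(n)$, and then to invoke Corollary~\ref{homomorphism} together with Proposition~\ref{properties}(ii) to propagate the strong HKT condition. The dimension check is immediate: $\dim(\mathfrak{sp}(l)\oplus l\,\mathfrak{u}(1))= l(2l+1)+l = 2l(l+1)$, so the relevant target in Corollary~\ref{homomorphism} is $\mathfrak{sp}(n)$ with $n = l(l+1)/2$, which matches the formulation of the proposition.

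Next I would define $D := i\circ \pi$, where $\pi\colon \mathfrak{sp}(l)\oplus l\,\mathfrak{u}(1)\to \mathfrak{sp}(l)$ is the projection onto the semisimple factor and $i\colon \mathfrak{sp}(l)\hookrightarrow \mathfrak{sp}(l(l+1)/2)$ is the natural injective homomorphism described in the paragraph immediately preceding the statement (which specialises to the identity of $\mathfrak{sp}(1)$ in the case $l=1$ treated earlier). As a composition of Lie algebra homomorphisms, $D$ is itself a Lie algebra homomorphism with image in $\mathfrak{sp}(n)$. By the equivalence of the three conditions stated just before Corollary~\ref{homomorphism}, this is the same as saying that $D$ is flat, satisfies $Dg=0$, and commutes with each $J_\alpha$ (i.e.\ $DJ_\alpha=0$); hence $D$ is a flat hyper-Hermitian connection on $\mathfrak{s}:=\mathfrak{sp}(l)\oplus l\,\mathfrak{u}(1)$.

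Finally, Corollary~\ref{homomorphism} applied to the strong HKT structure $(\{J_\alpha\},g)$ on $\mathfrak{s}$ (from the Joyce construction recalled at the start of the section) produces an HKT structure $(\{\tilde J_\alpha\},\tilde g)$ on the tangent Lie algebra $T_D\,\mathfrak{s}$, and Proposition~\ref{properties}(ii) guarantees that this induced structure is strong since the original one is. The main technical input is the existence of the inclusion $i$ landing in the specific copy of $\mathfrak{sp}(n)\subset \mathfrak{gl}(\mathfrak{s})$ cut out by the quaternionic structure $(\{J_\alpha\},g)$; once this is arranged, every remaining verification is formal and reduces to the general machinery developed in Section~\ref{Section}.
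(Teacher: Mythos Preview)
Your proposal is correct and follows precisely the route taken in the paper: define $D$ as the projection onto $\mathfrak{sp}(l)$ composed with the natural inclusion $i\colon \mathfrak{sp}(l)\hookrightarrow \mathfrak{sp}\bigl(l(l+1)/2\bigr)$, then invoke Corollary~\ref{homomorphism} and Proposition~\ref{properties}(ii). If anything, you are slightly more explicit than the paper, which leaves the strong condition and the identification of the target $\mathfrak{sp}(n)$ with the one determined by $(\{J_\alpha\},g)$ implicit.
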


\

 Consider  the following  strong HKT
reductive Lie algebras $\mathfrak g$ (see \cite{SSTV}): \begin{equation} \label{list}
\begin{array}{c}
{\mathfrak {su}}(2 l + 1), \; (l>1),\qquad {\mathfrak {su}}(2 l)
\oplus {\mathfrak u} (1), \qquad
{\mathfrak {so}}(2 l + 1) \oplus l\, {\mathfrak u} (1), \; (l>3), \\[4pt]
   {\mathfrak {so}}(4 l ) \oplus  2 l \, {\mathfrak u} (1), \quad\quad {\mathfrak {so}}(4 l + 2) \oplus (2 l -1) \, {\mathfrak u} (1).
\end{array}
\end{equation}
 It is possible to find a Lie algebra
homomorphism  $D$ between ${\mathfrak g}$ and ${\mathfrak {sp}}
(n)$, where $4 n = \dim  \, \mathfrak g$,  given by an  inclusion of
the Levi subalgebra of $\mathfrak g$ in  ${\mathfrak {sp}} (n)$.
Therefore, applying Corollary  \ref{homomorphism},  we  get new
strong HKT structures on $T_{D} \,  {\mathfrak g}$. Note that the
Lie algebra $T_D \,  \mathfrak g$ has a Levi-decomposition with a
compact Levi subalgebra  and  an abelian radical.   Since $D$ is the
inclusion,  for any   Lie algebra $\mathfrak g$  in the list
\eqref{list},  there exists a connected Lie group $K$ with Lie
algebra  $T_D \, \mathfrak g$  such that the Levi factor of $K$ is
compact. We observe that $K$ is a subgroup of the isometry group
$E(4n)$ of the Euclidean $4n$-dimensional space. Therefore, the
existence of lattices, which in this case are crystallographic
groups, is guaranteed by Bieberbach's theorem \cite{Bieb}. Moreover,
if one fixes~$n$, $E(4n)$  has only a finite number of lattices up
to isomorphism, hence, so does $K$. In conclusion we have the
following:

\begin{teo} \label{compactHKT}
Let $K$ be a  connected  Lie subgroup    of  $E(4n)$  with Lie
algebra $T_D \,  \mathfrak g$, where $\mathfrak g$ is isomorphic to
one of the Lie algebras in the list \eqref{list}. Then $K$ admits a
left-invariant strong HKT structure which induces a strong HKT
structure on any compact quotient of~$K$.
\end{teo}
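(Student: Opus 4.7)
The plan is to combine the Joyce construction of a strong HKT structure on each reductive Lie algebra in \eqref{list} with Corollary~\ref{homomorphism} and Proposition~\ref{properties}(ii), and then invoke Bieberbach's theorem to descend to compact quotients.

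First I would fix $\mathfrak g = \mathfrak s \oplus k\,\mathfrak u(1)$ in the list \eqref{list}, with $\mathfrak s$ the compact semisimple Levi factor, and recall from \cite{Jo, OP, GP} the left-invariant strong HKT structure $(\{J_\alpha\}, g)$ on $\mathfrak g$. By the discussion preceding the statement, the Levi subalgebra $\mathfrak s$ admits a natural injective Lie algebra homomorphism $i\colon \mathfrak s \hookrightarrow \mathfrak{sp}(n)$ (with $4n = \dim\mathfrak g$) coming from a faithful quaternionic unitary representation. Extending $i$ by zero on the abelian summand $k\,\mathfrak u(1)$ produces a Lie algebra homomorphism $D\colon \mathfrak g \to \mathfrak{sp}(n)$. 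Corollary~\ref{homomorphism} then yields an HKT structure $(\{\tilde J_\alpha\}, \tilde g)$ on $T_D\,\mathfrak g$, and Proposition~\ref{properties}(ii) guarantees that this structure is strong since the original one is. By left-translation this lifts to a left-invariant strong HKT structure on any connected Lie group with Lie algebra $T_D\,\mathfrak g$, in particular on $K$. Moreover, because $D$ takes values in $\mathfrak{sp}(n) \subset \mathfrak{so}(4n)$, the adjoint action of $\mathfrak s$ on the abelian radical $k\,\mathfrak u(1) \oplus \mathbb H^n$ is by skew-symmetric transformations, which is exactly what is needed to realise $K$ as a closed Lie subgroup of the Euclidean motion group $E(4n)$, consistent with the hypothesis of the theorem.

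For the descent to compact quotients, since $(\{\tilde J_\alpha\}, \tilde g)$ is left-invariant, each of the associated tensors (the three complex structures, the metric, the torsion $3$-form $c$, and $dc$) is invariant under left-multiplication by any discrete subgroup $\Gamma \subset K$. Hence the structure descends to a well-defined strong HKT structure on every compact quotient $\Gamma\backslash K$. The existence of at least one such $\Gamma$ is furnished by Bieberbach's theorem \cite{Bieb} applied to the embedding $K \hookrightarrow E(4n)$: since $K$ has compact Levi factor and abelian radical, it admits cocompact crystallographic subgroups, and in a fixed dimension only finitely many up to isomorphism.

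The main obstacle is the construction of the homomorphism $i\colon \mathfrak s \hookrightarrow \mathfrak{sp}(n)$ matching the dimension $4n = \dim\mathfrak g$ in each of the five families of \eqref{list}; for instance, for $\mathfrak s = \mathfrak{su}(2l+1)$ one has $\dim\mathfrak g = 4l(l+1)$, so one needs a quaternionic unitary representation of rank $n = l(l+1)$, and analogous rank matching must be checked in the remaining cases. A secondary delicate point is extracting a lattice that actually sits inside the subgroup $K$ rather than merely inside the ambient $E(4n)$; this is why the statement is phrased in terms of \emph{any} compact quotient of $K$, leaving the case-by-case verification of lattices to the general Bieberbach framework.
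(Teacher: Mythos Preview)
Your proposal is correct and follows essentially the same route as the paper, whose proof is the paragraph immediately preceding the theorem: construct $D$ via the inclusion of the Levi factor $\mathfrak s$ in $\mathfrak{sp}(n)$ (extended by zero on the abelian part), apply Corollary~\ref{homomorphism} together with Proposition~\ref{properties}(ii) for the strong condition, observe that $K$ sits in $E(4n)$, and invoke Bieberbach for lattices. You are in fact slightly more careful than the paper in explicitly citing Proposition~\ref{properties}(ii) and in flagging both the dimension-matching for $i\colon \mathfrak s \hookrightarrow \mathfrak{sp}(n)$ and the issue of lattices lying inside $K$ rather than merely in $E(4n)$.
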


\smallskip

\begin{remark}\label{rem_su(3)} When $\frak g = {\mathfrak {so}}(7) \oplus 3 \, {\mathfrak u} (1)$ or
${\mathfrak {su}} (3)$, there exists no Lie algebra homomorphism $D:
{\mathfrak {su}} (3)\to {\mathfrak {sp}} (2)$ or $D: {\mathfrak
{so}}(7) \oplus 3 \, {\mathfrak u} (1) \to {\mathfrak {sp}} (6)$.
This follows by computing the dimension of the irreducible
fundamental representations of $SU(3)$ and $SO(7)$. However, in both
cases there exists a positive integer $q$ and a Lie algebra
homomorphism $\rho : \frak g \to {\mathfrak {sp}} (q)$. Applying
Theorem \ref{generalization}, we obtain  new strong HKT Lie algebras
$T_{\rho}\, \frak g$.
\end{remark}




\

\section{HKT  structures  associated to  Hermitian Lie algebras}

Starting with  a Hermitian Lie algebra $({\mathfrak g}, J,\, g)$
with a   flat torsion-free connection $D$ which is complex, i.e.  such that $D J =0$, we can
construct a hyper-Hermitian  Lie algebra.  The previous  torsion-free connection is then {\em complex-flat} in the terminology of \cite{Jo2}.  Next, we prove  that the new
 hyper-Hermitian Lie algebra is HKT if and only if $({\mathfrak g}, J,\, g)$ is
K\"ahler.

Consider  as in Section \ref{Section} the tangent Lie algebra
$T_{ D} \, \frak g := {\mathfrak g} \ltimes_{D} {\mathfrak
g}$ with the following Lie bracket:
$$  [(X_1,X_2), (Y_1,Y_2)]=([X_1, Y_1] ,D_{X_1} Y_2 -  D
_{Y_1} X_2 )
$$
and the following hypercomplex structure (see \cite[Corollary
4.3]{BD}):
\begin{equation}  \label{hk}
J_1(X_1, X_2)= (JX_1, - JX_2), \quad  J_2(X_1, X_2)= (X_2, -X_1),
\quad  J_3 = J_1J_2.
\end{equation}
If one considers the inner product  $\tilde g$ induced by $g$ such
that $( {\mathfrak g}, 0)$ is orthogonal to $(0, {\mathfrak g})$,
the following theorem shows that   $(\{ J_{\alpha} \}, \tilde g)$ is
HKT on $T_D \, \frak g$ if and only if $( J,\, g)$ is K\"ahler on
$\frak g$.
\begin{teo} \label{construction2} Let $({\mathfrak g}, J,\,  g)$ be a Hermitian Lie algebra and  $D$
  a flat torsion-free
complex connection on ${\mathfrak g}$. Then
 $(T_{D} \,  {\mathfrak g}, \{ J_{\alpha } \},
\tilde g)$ is HKT if and only if
   $({\mathfrak g}, J, g)$  is K\"ahler.
\end{teo}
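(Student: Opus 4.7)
The proof has two directions.

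For the necessary direction, I apply the characterization \eqref{HKTcondition} of HKT on $T_D\mathfrak g$ to triples $A_i=(p_i,0)$ with $p_i\in\mathfrak g$. Under the hypercomplex structure \eqref{hk} one has $J_2(p,0)=(0,-p)$ and $J_3(p,0)=(0,Jp)$, so for $\alpha\in\{2,3\}$ both entries of each commutator lie in $(0,\mathfrak g)$, where the bracket \eqref{t_D} reduces to $D_0 Y_2-D_0 X_2=0$. Hence the $\alpha=2$ and $\alpha=3$ cyclic sums vanish. The $\alpha=1$ cyclic sum in turn collapses to $\sum_{\mathrm{cyc}}g([Jp_i,Jp_{i+1}],p_{i+2})$. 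Setting this equal to zero and substituting $p_i\mapsto Jp_i$, using $g(JX,JY)=g(X,Y)$, yields $\sum_{\mathrm{cyc}}g([p_i,p_{i+1}],Jp_{i+2})=0$, which is precisely $d\omega=0$. Hence $(\mathfrak g,J,g)$ is K\"ahler.

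For the sufficient direction, assume $d\omega=0$ and verify \eqref{HKTcondition} for general $A_i=(p_i,q_i)$. Expanding each cyclic sum $S_\alpha=\sum_{\mathrm{cyc}}\tilde g([\tilde J_\alpha A_i,\tilde J_\alpha A_{i+1}],A_{i+2})$ via \eqref{t_D}, \eqref{hk}, and $DJ=0$, each $S_\alpha$ splits naturally into a \emph{bracket part} (first-component pairings) and a \emph{$D$-part} (second-component pairings). The bracket part of $S_1$ is $\sum_{\mathrm{cyc}}g([Jp_i,Jp_{i+1}],p_{i+2})$, which vanishes by K\"ahler; the bracket parts of $S_2$ and $S_3$ are $\sum_{\mathrm{cyc}}g([q_i,q_{i+1}],p_{i+2})$ and $\sum_{\mathrm{cyc}}g([Jq_i,Jq_{i+1}],p_{i+2})$, which differ by Nijenhuis-type terms $\sum_{\mathrm{cyc}}g([Jq_i,q_{i+1}]+[q_i,Jq_{i+1}],Jp_{i+2})$. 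I then show that this bracket-side discrepancy is cancelled by the analogous discrepancy between the $D$-parts, after rewriting $D_XY=D_YX+[X,Y]$ (torsion-freeness) and commuting $J$ through $D$ (from $DJ=0$), and repeatedly using the $g$-skew-symmetry of $J$ together with $d\omega=0$ to close the identity $S_1=S_2=S_3$.

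The main obstacle is the sufficient direction. Since $D$ is not assumed to be metric, there is no direct integration-by-parts identity available on $g(D_XY,Z)$, so matching the three $D$-parts requires genuine computation rather than a symmetry argument. The crucial move is to use torsion-freeness to convert each $D_XY$ into $D_YX+[X,Y]$, thereby producing bracket terms which combine with the Nijenhuis-type discrepancies of the bracket parts and can then be absorbed by invoking $d\omega=0$ on various triples. Careful bookkeeping of cyclic sums is the technical heart of the argument.
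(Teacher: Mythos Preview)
Your necessary direction is correct and is essentially the paper's specialization $X_2=Y_2=0$ (you set all second components to zero, observe that $S_2=S_3=0$ on such triples, and read off $d\omega=0$ from $S_1=0$).

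For the sufficient direction, your plan has the right ingredients (torsion-freeness converts differences $D_AB-D_BA$ into brackets, $DJ=0$ lets you commute $J$, and $d\omega=0$ kills the resulting cyclic bracket sums), but the outline as written only argues $S_2=S_3$: you identify the bracket-part discrepancy between $S_2$ and $S_3$ and claim the $D$-part discrepancy cancels it. That alone does not give HKT, which requires $S_1=S_2=S_3$; knowing that the bracket part of $S_1$ vanishes and that $S_2=S_3$ still leaves the equality $S_1=S_2$ unverified. So there is a genuine gap in the write-up, even though the same method would handle the missing pair.

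The paper's execution is both more complete and cleaner on this point. Rather than splitting each $S_\alpha$ into bracket and $D$-parts and comparing them separately, the paper computes the full difference $S_1-S_3$ in one stroke. This particular pair is the right one to choose: since $J_1(p,q)=(Jp,-Jq)$ and $J_3(p,q)=(Jq,Jp)$ both carry a $J$ on each component, every $D$-term $g(D_{JA}JB,C)$ in $S_1$ is matched by a term $g(D_{JB}JA,C)$ in $S_3$, so torsion-freeness collapses the twelve $D$-terms into six bracket terms directly. These combine with the six first-component bracket terms to give exactly
\[
d\omega(JX_1,JY_1,JZ_1)-d\omega(JX_2,JY_2,JZ_1)-d\omega(JX_1,JY_2,JZ_2)-d\omega(JY_1,JZ_2,JX_2),
\]
which handles both directions simultaneously. (The remaining equality $S_1=S_2$ follows either by an analogous computation or by noting that $(p,q)\mapsto(p,Jq)$ is an isometric automorphism of $T_D\,\mathfrak g$ commuting with $J_1$ and interchanging $J_2$ with $\pm J_3$.) I would recommend replacing your $S_2$ versus $S_3$ comparison by the $S_1$ versus $S_3$ one, or at least adding the missing $S_1=S_2$ check.
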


\begin{proof}
By \cite{DF} $(T _{D} {\mathfrak g}, \{ J_{\alpha }
\}, \tilde g)$ is HKT  if and only if
$$
\begin{array} {l}
\tilde g ([J_1 (X_1, X_2), J_1 (Y_1, Y_2)], (Z_1, Z_2)) +\tilde g ([J_1 (Y_1,
Y_2),
J_1 (Z_1, Z_2)], (X_1, X_2))\\[4pt]
  + \tilde g ([J_1 (Z_1, Z_2), J_1 (X_1, X_2)], (Y_1, Y_2)) =\tilde g ([J_3 (X_1,
X_2), J_3 (Y_1, Y_2)], (Z_1, Z_2))\\[4pt]
   + \tilde g ([J_3 (Y_1, Y_2), J_3 (Z_1, Z_2)], (X_1, X_2))
   + \tilde g  ([J_3 (Z_1, Z_2), J_3 (X_1, X_2)], (Y_1, Y_2)),
   \end{array}
$$
for any $X_i, Y_i, Z_i \in {\mathfrak g}$, $i = 1, 2$. One has
\begin{equation}\label{eq1}
\begin{array}{c}
\tilde g  ([J_1 (X_1, X_2), J_1 (Y_1, Y_2)], (Z_1, Z_2)) +\tilde g ([J_1 (Y_1,
Y_2),
J_1 (Z_1, Z_2)], (X_1, X_2))\\[4pt]
  + \tilde g  ([J_1 (Z_1, Z_2), J_1 (X_1, X_2)], (Y_1, Y_2)) = g ([JX_1, J
Y_1], Z_1) + g ([JY_1, J Z_1], X_1)\\[4pt]
   + g ([JZ_1, J X_1], Y_1) - g (D_{JX_1} J Y_2, Z_2) - g
(D_{JY_1} J Z_2, X_2) - g (D_{JZ_1} J X_2,
Y_2)\\[4pt]
    + g (D_{JY_1} J X_2, Z_2) + g (D_{JZ_1} J
Y_2, X_2) + g (D_{JX_1} J Z_2, Y_2).
  \end{array}
  \end{equation}
On the other hand,
\begin{equation}\label{equation2}
\begin{array}{c}
\tilde g ([J_3 (X_1, X_2), J_3 (Y_1, Y_2)], (Z_1, Z_2)) +\tilde g ([J_3 (Y_1,
Y_2),
J_3 (Z_1, Z_2)], (X_1, X_2))\\[4pt]
  + \tilde g  ([J_3 (Z_1, Z_2), J_3 (X_1, X_2)], (Y_1, Y_2)) = g ([JX_2, J
Y_2], Z_1) + g ([JY_2, J Z_2], X_1)\\[4pt]
   + g ([JZ_2, J X_2], Y_1) - g (D_{JX_2} J Z_1, Y_2) - g
(D_{JY_2} J X_1, Z_2) - g (D_{JZ_2} J Y_1,
X_2)\\[4pt]
    + g (D_{JY_2} J Z_1, X_2) + g (D_{JZ_2} J
X_1, Y_2) + g (D_{JX_2} J Y_1, Z_2).
  \end{array}
  \end{equation}
Substracting the right-hand sides of the equations  \eqref{eq1}
and \eqref{equation2}, and  using the fact that $D$ is torsion
free,  we get
$$
d \omega (J X_1, J Y_1, J Z_1) - d \omega ({JX_2}, J Y_2, JZ_1)
  - d \omega ({JX_1}, J Y_2, JZ_2)
   - d \omega ({JY_1}, J Z_2, JX_2),
  $$
where $\omega$ is the K\"ahler form of $(J,g)$.

If $(\{J_{\alpha} \}, \tilde g)$ is HKT,  the above expression
vanishes for any $X_i, Y_i, Z_i \in \mathfrak g$. In particular,
setting $X_2 = Y_2 = 0$, we obtain that $\omega$ is closed, that is,
$({\mathfrak g} , J,\, g)$ is K\"ahler. Conversely, if $d \omega
=0$, the hyper-Hermitian structure   $(\{J_{\alpha} \}, \tilde g)$
is HKT. \end{proof}

\smallskip

\begin{remark}  We recall from \cite[Corollary 4.3]{BD} that the
Obata connection $\tilde \nabla^O$ of the HKT  Lie  algebra $(T _{D} {\mathfrak g}, \{ J_{\alpha } \}, \tilde g)$ is given by:
$$
\tilde \nabla^O_{(X_1, X_2)} (Y_1, Y_2) = (D_{X_1} Y_1, D_{X_1}
Y_2).
$$
and therefore is flat.
\end{remark}

Theorem  \ref{construction2} can be applied to any of the
$4$-dimensional K\"ahler Lie algebras classified in \cite{Ov}. Next,
we illustrate our construction in two particularly interesting
examples.

\begin{example}\label{e(2)} {\rm Let $\frak g = \R e_1 \oplus \frak e (2)$ be a trivial central
extension of $\frak e (2)$, the Lie algebra of the isometry group of
the Euclidean plane. We fix a basis $\{e_2, e_3,e_4\}$ of $\frak e
(2)$ with non-zero Lie brackets:
$$ [e_2,e_3]= e_4, \qquad \quad [e_2,e_4]= -e_3,
$$
and an inner product $g$ on $\frak g$ given by:
$$ g= \sum_{i=1}^4 (e^i)^2.$$
Let $J$ be the following complex structure on $\frak g$:
$$ Je_1=e_2, \qquad\quad Je_3=e_4.
$$
It turns out that $(\frak g, J, g)$ is K\"ahler. We define a flat
torsion-free connection $D$ on~$\frak g$ as follows:
$$ D_{e_1}=\begin{pmatrix}1&0& &  \\
0&1& &  \\         & & 0& 0  \\ & & 0& 0 \end{pmatrix},\quad
 D_{e_2}=\begin{pmatrix}0&-1& &  \\
1&0& &  \\         & & 0& -1  \\ & & 1& 0 \end{pmatrix},\quad
D_{e_3}=D_{e_4}=0.
$$
Since $D$ also satisfies $DJ=0$,  Theorem~\ref{construction2}
implies that $(T_D \, \mathfrak g, \{ J_{\alpha} \}, \tilde g)$ is
HKT.

 We show next that the previous HKT structure is weak by computing the torsion $3$-form
 $c$, which turns out to be non-closed.

The hypercomplex structure $\{ J_{\alpha} \}$ on $T_D \, {\mathfrak
g}$ is given by
\begin{align*}
J_1 e_1 &= e_2,&   J_1 e_3 &= e_4,&  J_1 e_5& = -e_6,&  J_1 e_7 &= -e_8,\\
J_2 e_1 &= -e_5,&  J_2 e_2 &= -e_6,&  J_2 e_3& = -e_7,&  J_2 e_4& =
-e_8,
\end{align*}
and the K\"ahler form of $(J_1, \tilde g)$ is $$\omega_1=
e^{12}+e^{34}-e^{56}-e^{78}.
$$ Then the torsion $c$ of the HKT structure and its exterior derivative $dc$ are given by
$$c=-J_1 d\omega _1=2\, e^{256}  , \qquad \quad  dc = - 4\,  e^{1256},$$
hence $c$ is not closed. It turns out that $c$ is co-closed, or,
equivalently, the Ricci tensor of the Bismut connection is symmetric
and that the metric $\tilde g$ is  not balanced since  $d( \omega_1
\wedge  \omega_1  \wedge \omega_1) \neq 0$. However, the Lee form
$\theta= 2e^1$ is closed,  therefore $\tilde g$ is conformally
balanced on the corresponding simply connected $8$-dimensional
solvable Lie group.}

\end{example}

\begin{example}
{\rm It is well known that the complex hyperbolic space $SU(2,1)/S\left( U(2)\times
U(1)\right)$ with its standard K\"ahler  structure
 admits a simply connected solvable
Lie group $G$ acting simply transitively by holomorphic isometries.
The $4$-dimensional Lie algebra  $\frak g$ of $G$ has a basis $e_1,
\dots , e_4$ with non-zero Lie brackets:
\[
[e_1, e_4] = - \frac12 \, e_1,  \quad [e_2, e_4] = - \frac12 \,
e_2,\quad [e_1, e_2] = e_3, \quad [e_3, e_4] = - e_3 .\]  The
K\"ahler  structure on $SU(2,1)/S\left( U(2)\times U(1)\right)$
induces a complex structure $J$ and an inner product $g$ on
$\mathfrak g$ given by:
\[ J e_1 = e_2, \qquad J e_3 = - e_4 , \qquad  g= \sum_{i=1}^4 (e^i)^2. \]
Therefore,   $(\frak g, J,g)$ is K\"ahler non-flat. Consider the
flat torsion-free connection $D$ on~$\frak g$ defined by $D_{e_3}=0$
and:
$$ D_{e_1}=\begin{pmatrix} & & 0&0  \\
 & & 0&  0\\   0      & \frac12& &  \\ -\frac12& 0& &
\end{pmatrix},\qquad
 D_{e_2}=\begin{pmatrix} & &0& 0 \\
 & & 0& 0 \\         -\frac12      &0 & &   \\ 0&-\frac12 & &
\end{pmatrix},\ \qquad D_{e_4}= \begin{pmatrix}\frac12&0& &  \\
0&\frac12& &  \\         & & 1& 0  \\ & & 0& 1 \end{pmatrix}.
$$
   Since $DJ=0$, Theorem~\ref{construction2}
implies that $(T_D \, \mathfrak g, \{ J_{\alpha} \}, \tilde g)$ is
HKT, where $\{ J_{\alpha} \}$ is defined as in \eqref{hk}. The
K\"ahler form of $(J_1, \tilde g)$ is given by: $$\omega_1=
e^{12}-e^{34}-e^{56}+e^{78},
$$
with corresponding  torsion $3$-form $c$:\[ c=-J_1 d\omega _1=-
\frac12 \, e^{268} - \frac12 \, e^{158} + 2 \, e^{378} + \frac12 \,
e^{167} - \frac12 \, e^{257} - e^{356}.\] It can be checked that $c$
is not closed (hence the HKT structure is weak) and the metric
$\tilde g$ is not balanced.
However, the Lee form $\theta = -3e^4$ is closed, hence exact on the corresponding simply connected $8$-dimensional solvable Lie group $T_D\, G$.
Therefore, the left invariant metric induced by $\tilde g$ on $T_D\, G$ is conformally balanced.} \end{example}

\

\end{document}